\numberwithin{equation}{section}
\newtheorem{define}{Definition}[section]
\newtheorem{example}[define]{Example}
\theoremstyle{remark}
\newtheorem{remark}[define]{Remark}
\theoremstyle{plain}
\newtheorem{theo}[define]{Theorem}
\newtheorem{lemma}[define]{Lemma}
\newtheorem{prop}[define]{Proposition}
\newtheorem{cor}[define]{Corollary}
\newcommand{\X}{\mathscr X}
\newcommand{\C}{\mathscr C}
\newcommand{\E}{\mathscr E}
\newcommand{\A}{\mathscr A}
\newcommand{\D}{\mathscr D}
\newcommand{\K}{\mathfrak K}
\newcommand{\Hom}{\mathfrak{Hom}}
\newcommand{\cat}{\mathfrak{Cat}}
\author{Michael Lambert}
\title{Computing Weighted Colimits}
\begin{document}
\maketitle

\begin{abstract}
A well-known result of SGA4 shows how to compute the pseudo-colimit of a category-valued pseudo-functor on a 1-category.  The main result of this paper gives a generalization of this computation by constructing the weighted pseudo-colimit of a category-valued pseudo-functor on a 2-category.  From this is derived a computation of the weighed bicolimit of a category-valued pseudo-functor.
\end{abstract}

\tableofcontents

\section{Introduction}

The present paper has its origin in the need for explicit constructions of weighted pseudo-colimits of diagrams valued in certain 2-categories.  The intended application is a characterization of flat pseudo-functors in terms of higher filteredness conditions, generalizing the well-known characterization of ordinary flat set-valued functors (see for example Ch. VII of \cite{MM}).  The question has been studied in the recent papers of Descotte, Dubuc and Szyld \cite{DDS} and \cite{DDS1}.  However, the present approach to the problem will be through the calculus of fractions using the computations of this paper.  More overview on this application appears in \S \ref{prospectus on filteredness}.

The main result of this paper should be seen as a generalization of \S 6.4.0 of \cite{SGA4V2}, which shows how to compute colimits of category-valued pseudo-functors on 1-categories.  The main result, Theorem \ref{MAIN THEOREM}, shows how to compute the pseudo-colimit of any pseudo-functor $E\colon \mathfrak C\to\mathfrak{Cat}$ weighted by $W\colon \mathfrak C^{op}\to\mathfrak{Cat}$ on a small 2-category $\mathfrak C$.  A statement is the following: for any such pseudo-functors $E$ and $W$, a category $E\star W$, obtained as a category of fractions as in \S \ref{colim candidate constru} below, presents the pseudo-colimit of $E$ weighted by $W$ in the following precise sense.

\begin{theo}[Colimit Computation] For pseudo-functors $E\colon \mathfrak C\to\mathfrak{Cat}$ and $W\colon \mathfrak C^{op}\to\mathfrak{Cat}$ on a small 2-category $\mathfrak C$, the category $E\star W$ is, up to isomorphism, the pseudo-colimit of $E$ weighted by $W$ in the sense that there is an isomorphism
\[ \mathfrak{Cat}(E\star W,\X)\cong \Hom(\mathfrak C^{op},\mathfrak{Cat})(W,\mathfrak{Cat}(E,\X))
\]
of categories for any small category $\X$.
\end{theo}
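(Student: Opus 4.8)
The plan is to deduce the isomorphism from three universal properties, layered on top of one another: that of the category of fractions, that of the (two-sided) Grothendieck-type category underlying $E\star W$, and a routine unwinding of the weighted function-complex on the right. Recall that \S\ref{colim candidate constru} realizes $E\star W$ as $\Sigma^{-1}\mathscr G$ for a category $\mathscr G=\mathscr G(E,W)$ — whose objects are triples $(c,w,x)$ with $c$ an object of $\mathfrak C$, $w\in Wc$ and $x\in Ec$, and whose morphisms encode $1$-cells of $\mathfrak C$ together with morphisms of the fibres $Wc$ and $Ec$, subject to relations coming from the $2$-cells of $\mathfrak C$ and the pseudofunctoriality constraints of $E$ and $W$ — and a distinguished class $\Sigma$ consisting of the arrows that come from $1$-cells of $\mathfrak C$ alone (with identity fibre components). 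The first step is purely formal: by the universal property of the category of fractions, a functor $E\star W=\Sigma^{-1}\mathscr G\to\X$ is the same as a functor $\mathscr G\to\X$ sending every arrow of $\Sigma$ to an isomorphism, and natural transformations between two such correspond bijectively to natural transformations of the underlying functors $\mathscr G\to\X$; thus $\mathfrak{Cat}(E\star W,\X)$ is isomorphic to the full subcategory of $\mathfrak{Cat}(\mathscr G,\X)$ on the $\Sigma$-inverting functors.

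Second, I would identify functors $\mathscr G\to\X$ with lax (or oplax — whichever convention is fixed in \S\ref{colim candidate constru}) $W$-weighted cocones under $E$ with vertex $\X$. Such a cocone consists of a family of functors $\gamma_c\colon Wc\times Ec\to\X$; for each $1$-cell $f\colon c\to c'$ a natural transformation comparing $\gamma_c(Wf\,{-},{-})$ with $\gamma_{c'}({-},Ef\,{-})$, \emph{not} assumed invertible; coherence data witnessing compatibility with composites and identities of $\mathfrak C$ and with the constraints of $E$ and $W$; and, since $\mathfrak C$ is a genuine $2$-category, one equation for each $2$-cell of $\mathfrak C$. This is the usual ``Grothendieck construction $\dashv$ global sections'' correspondence: restricting a functor $\mathscr G\to\X$ to the fibres produces the $\gamma_c$, its values on the generating $\mathfrak C$-arrows produce the structure transformations, and the defining relations of $\mathscr G$ translate precisely into the coherence equations — the only difference from the $1$-categorical case of \cite{SGA4V2} being the extra family of equations indexed by $2$-cells. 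On morphisms, natural transformations between functors $\mathscr G\to\X$ correspond to modifications of weighted cocones, i.e. families of natural transformations $\gamma_c\Rightarrow\delta_c$ commuting with all structure maps.

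Third, I would observe that $\Sigma$ is chosen so that a functor $\mathscr G\to\X$ inverts $\Sigma$ exactly when the corresponding weighted cocone has \emph{invertible} structure transformations, i.e. is a genuine pseudonatural transformation $W\Rightarrow\mathfrak{Cat}(E,\X)$ — its components $\gamma_c\colon Wc\times Ec\to\X$ transposing, by cartesian closedness of $\mathfrak{Cat}$, to functors $Wc\to\mathfrak{Cat}(Ec,\X)$ — and that a modification of such cocones is precisely a modification in $\Hom(\mathfrak C^{op},\mathfrak{Cat})$. Composing the three bijections yields the asserted isomorphism
\[
\mathfrak{Cat}(E\star W,\X)\;\cong\;\Hom(\mathfrak C^{op},\mathfrak{Cat})\bigl(W,\mathfrak{Cat}(E,\X)\bigr),
\]
and since each step is manifestly functorial in $\X$ — post-composition on the left corresponding to post-composition with the induced $2$-natural transformation $\mathfrak{Cat}(E,\X)\to\mathfrak{Cat}(E,\X')$ on the right — so is the composite; unwinding it exhibits the universal weighted cocone as the canonical functors $Wc\times Ec\to E\star W$ arising from the coprojections into $\mathscr G$ followed by the localization.

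The main obstacle I anticipate is the bookkeeping in the second and third steps: matching the variances and the pseudofunctor constraints of $E$ and $W$ against the generators-and-relations presentation of $\mathscr G$, and in particular checking that inverting $\Sigma$ forces invertibility of \emph{exactly} the structure transformations and collapses nothing further, so that the correspondence is a genuine bijection rather than merely essentially surjective. A related subtlety is that the statement claims an isomorphism of hom-categories, not an equivalence, so all three comparisons must be realized as strict bijections on objects and on morphisms: the category-of-fractions step must be invoked in its strict $1$-categorical form, and the Grothendieck comparison must be verified to be an isomorphism of categories rather than just an equivalence. Both are standard, but the proof should establish them explicitly rather than appeal to the weaker bicategorical versions.
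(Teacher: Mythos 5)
Your proposal is correct and follows essentially the same route as the paper: both realize $E\star W$ as a localization of the two-sided category of elements $\pi_0\Delta(E,W)$ and prove the isomorphism by combining the strict universal property of the category of fractions with an explicit matching of functors out of $\pi_0\Delta(E,W)$ against pseudo-natural transformations $W\Rightarrow\mathfrak{Cat}(E,\X)$. The only difference is organizational: you factor the comparison through an intermediate category of lax weighted cocones, whereas the paper constructs the two functors $\Phi$ and $\Psi$ directly and verifies in a sequence of lemmas exactly the well-definedness and coherence checks (compatibility with the $2$-cells of $\mathfrak C$ after passing to connected components, and inversion of the cartesian arrows) that your second and third steps would require.
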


Notice that the colimit universal property in the main theorem formally resembles a tensor-hom adjunction.  This is why the weighted pseudo-colimit is of interest in the characterization of flat pseudo-functors in terms of filteredness conditions.  For it is precisely the exactness of such a tensor product extension that defines flatness.  An explicit computation of the weighted pseudo-colimit will allow an easy treatment of this characterization.

Colimit computations have been of interest historically and recently.  Passing to connected components seems first to have featured in the conical colimit computations of \S I,7.11 of \cite{GrayFormalCats}.  R. Street and E. Dubuc constructed bicolimits of category-valued 2-functors on prefiltered 2-categories in \cite{SD}.  In \S 3.2 of F. Lawler's thesis \cite{LawlerThesis}, there is a computation of conical pseudo-colimits indexed by bicategories similar to the one subsequently presented here.  From this, Lawler computes weighted bicolimits using certain descent diagrams.  The paper \cite{DDS1} of Descotte, Dubuc, and Szyld shows how to compute certain so-called ``$\sigma$-filtered $\sigma$-colimits'' in $\mathfrak{Cat}$.  In the latter two references, weighted colimits are then presented in terms of conical limits.  

In light of the intended application to flatness and filteredness, it seemed worthwhile, however, to give a direct, closed-formed computation of weighted pseudo- and bicolimits from which the conical ones could be otained as a special case.  The insight leading to the present construction in \S \ref{colim candidate constru} is the observation that the category of fractions technique can be carried out for both a category-valued functor and its weight by using a ``diagonal category" $\Delta(E,W)$ that packs the information from the category of elements of each functor into the same category.  Of course 2-cells must be added in to account for the indexing 2-category, but passing to connected components makes the candidate sufficiently 2-categorically discrete not only to be a 1-category but also to satisfy all the necessary 2-dimensional coherence conditions.  

The bulk of the work will be done in \S \ref{assignments and up} where the universal property of the colimit candidate is verified explicitly.  The main result and some corollaries are stated in \S \ref{main theo and corollaries}.  Then \S \ref{bicolims} uses the main theorem to give a construction of weighted bicolimits.  Finally \S \ref{prospectus} discusses further extensions and applications of the main result, including the application to higher filteredness conditions mentioned above.  The rest of this introductory section provides background and notation needed in this and subsequent papers in the form required.

\subsection{2-Categories}

Roughly, the required material on 2-categories is Chapters I,2 and I,3 of Gray's \cite{GrayFormalCats}.  Other references are Chapter 7 of \cite{Handbook1}, Chapter B1 of \cite{Elephant}, and the paper of \cite{KS}.  Some notation and terminology will differ, so here follows a summary of the conventions used throughout. 

A 2-category $\K$ consists of objects, 1-cells, and transformations satisfying well-known axioms (see \S 7.1 of \cite{Handbook1} for example).  Vertical composition of 2-cells will be denoted by juxtapostion `$\beta\alpha$'; while horizontal composition is denoted by `$\ast$' as in $\gamma\ast \delta$.  When horizontally composing a 2-cell with a vertical identity morphism write, for example, `$\alpha\ast f$' or `$g\ast \beta$'.  In general $\K(A,B)$ denotes the vertical category of morphisms $A\to B$ of $\K$ and 2-cells between them.  Each hom category $\K(A,B)$ will be assumed to be small with respect to some fixed category $\mathbf{Set}$.  Any 2-category is a bicategory in the sense of \cite{Benabou} with strict unit and associativity. 

 The notation `$\K^{op}$' indicates the 1-dimensional dual of $\K$; and `$\K^{co}$' denotes the 2-dimensional dual with 2-cells reversed; and `$\K^{coop}$' indicates the 2-category with both 1- and 2-cells reversed.  
 
The basic example is the 2-category $\mathfrak{Cat}$ of small categories relative to a fixed category $\mathbf{Set}$, functors between them, and their natural transformations.  The notation $\mathfrak{CAT}$ will be used for an enlarged 2-category of categories containing a 1-category $\mathbf{Set}$ of sets as an object.  The notation $\mathbf{Cat}$ is used for the 1-category of small categories and functors between them, without considering the 2-dimensional structure.  Generally, any 2-category $\mathfrak A$ has an underlying 1-category $|\mathfrak A|$ obtained by forgetting the 2-cells.  Thus, $|\mathfrak{Cat}|$ and $\mathbf{Cat}$ are essentially notation for the same category.

\begin{define} \label{ps funct defn} A \textbf{pseudo-functor}, or ``homomorphism" as in \cite{Benabou}, between 2-categories $F\colon \mathfrak K\to \mathfrak L$ assigns to each object $A\in \mathfrak K_0$ an object $FA$ of $\mathfrak L$; to each arrow $f$ of $\mathfrak K$ an arrow $Ff$ of $\mathfrak L$; and to each 2-cell $\alpha$ of $\mathfrak K$ a 2-cell $F\alpha$ of $\mathfrak L$; and specifies isomorphisms $\phi_{f,g}\colon FgFf \Rightarrow F(gf)$ and $\phi_A\Rightarrow 1_{FA}\to F1_A$ for each object $A\in\mathfrak K_0$ and composable pair of arrows $f$ and $g$ all satisfying the axioms of Definition B1.1.2 on p.238 of \cite{Elephant}.  Among these is the statement that the equation $F(\beta\alpha)=F\beta F\alpha$ holds for any vertically composable 2-cells $\alpha$ and $\beta$.  There is also the requirement that for horizontally composable 2-cells
$$\begin{tikzpicture}
\node(1){$A$};
\node(2)[node distance=.5in, right of=1]{$\Downarrow \alpha$};
\node(3)[node distance=.5in, right of=2]{$B$};
\node(4)[node distance=.5in, right of=3]{$\Downarrow \beta$};
\node(5)[node distance=.5in, right of=4]{$C$};
\draw[->, bend left](1) to node [above]{$f$}(3);
\draw[->,bend right](1) to node [below]{$g$}(3);
\draw[->,bend left](3) to node [above]{$h$}(5);
\draw[->,bend right](3) to node [below]{$k$}(5);
\end{tikzpicture}$$
the relationship between the images under $F$ is described by the equation
\begin{equation} \label{ps funct coherence cndn} \phi_{g,k}(F\beta\ast F\alpha) = F(\beta\ast \alpha)\phi_{f,h}.
\end{equation}
A pseudo-functor is \textbf{normalized} if the $\phi_A$ as above are identities.  A normalized pseudo-functor is called a \textbf{2-functor} if all of the cells $\phi_{f,g}$ are identities.
\end{define}

\begin{remark} Pseudo-functors will always be assumed to be normalized.
\end{remark}

\begin{define} \label{PSNATTRANSF} A \textbf{pseudo-natural transformation} $\alpha\colon F \Rightarrow G$ between given pseudo-functors $F,G\colon \mathfrak K\rightrightarrows \mathfrak L$ consists of a family of arrows $\alpha_A\colon FA\to GA$ of $\mathfrak L$ indexed over the objects $A\in\mathfrak K_0$ together with, for each arrow $f\in \mathfrak K_1$, an invertible 2-cell
$$\begin{tikzpicture}
\node(1){$FA$};
\node(2)[node distance=1in, right of=1]{$GA$};
\node(3)[node distance=.8in, below of=1]{$FB$};
\node(4)[node distance=.8in, below of=2]{$GB$};
\node(5)[node distance=.5in, right of=1]{$$};
\node(6)[node distance=.4in, below of=5]{$\alpha_f \cong$};
\draw[->](1) to node [above]{$\alpha_A$}(2);
\draw[->](1) to node [left]{$Ff$}(3);
\draw[->](2) to node [right]{$Gf$}(4);
\draw[->](3) to node [below]{$\alpha_B$}(4);
\end{tikzpicture}$$ 
satisfying the following two compatibility conditions:
\begin{enumerate}
\item[]{\textbf{Pseudo-Naturality 1.}} For composable arrows $f$ and $g$ of $\mathfrak K$, there is an equality of 2-cells
$$\begin{tikzpicture}
\node(1){$FA$};
\node(2)[node distance=1in, right of=1]{$GA$};
\node(3)[node distance=.8in, below of=1]{$FB$};
\node(4)[node distance=.8in, below of=2]{$GB$};
\node(5)[node distance=.8in, below of=3]{$FC$};
\node(6)[node distance=.8in, below of=4]{$GC$};
\node(7)[node distance=.5in, right of=1]{$$};
\node(8)[node distance=.4in, below of=7]{$\cong$};
\node(9)[node distance=.8in, below of=8]{$\cong$};
\node(10)[node distance=.7in, right of=4]{$=$};
\node(11)[node distance=2in, right of=8]{$FA$};
\node(12)[node distance=1in, right of=11]{$GA$};
\node(13)[node distance=.8in, below of=11]{$FC$};
\node(14)[node distance=.8in, below of=12]{$GC$};
\node(15)[node distance=1.30in, right of=10]{$\cong$};
\node(16)[node distance=1.1in, right of=15]{$\cong$};
\node(17)[node distance=.4in, right of=16]{$GB.$};
\node(18)[node distance=.4in, left of=3]{$\cong$};
\draw[->](1) to node [above]{$\alpha_A$}(2);
\draw[->](1) to node [left]{$Ff$}(3);
\draw[->](2) to node [right]{$Gf$}(4);
\draw[->](3) to node [below]{$\alpha_B$}(4);
\draw[->](3) to node [left]{$Fg$}(5);
\draw[->](4) to node [right]{$Gg$}(6);
\draw[->](5) to node [below]{$\alpha_C$}(6);
\draw[->](11) to node [above]{$\alpha_A$}(12);
\draw[->](11) to node [left]{$F(gf)$}(13);
\draw[->](12) to node [right]{$G(gf)$}(14);
\draw[->](13) to node [below]{$\alpha_C$}(14);
\draw[->,bend right=80](1) to node [left]{$F(gf)$}(5);
\draw[->,bend left=20](12) to node [above]{$Gf$}(17);
\draw[->,bend left=20](17) to node [below]{$Gg$}(14);
\end{tikzpicture}$$
\item[]{\textbf{Pseudo-Naturality 2.}} For any 2-cell $\theta \colon f\Rightarrow g$ of $\mathfrak K$, there is an equality of 2-cells as depicted in the diagram
$$\begin{tikzpicture}
\node(1){$FA$};
\node(2)[node distance=1.2in, right of=1]{$GA$};
\node(3)[node distance=1in, below of=1]{$FB$};
\node(4)[node distance=1in, below of=2]{$GB$};
\node(5)[node distance=.7in, right of=1]{$$};
\node(6)[node distance=.5in, below of=5]{$\cong$};
\node(7)[node distance=2in, right of=2]{$FA$};
\node(8)[node distance=1.2in, right of=7]{$GA$};
\node(9)[node distance=1in, below of=7]{$FB$};
\node(10)[node distance=1in, below of=8]{$GB.$};
\node(11)[node distance=1.5in, right of=6]{$=$};
\node(12)[node distance=1.5in, right of=11]{$\cong$};
\node(13)[node distance=.4in, below of=1]{$F\theta$};
\node(14)[node distance=.53in, below of=1]{$\cong$};
\node(15)[node distance=.4in, below of=8]{$G\theta$};
\node(16)[node distance=.53in, below of=8]{$\cong$};
\draw[->](1) to node [above]{$\alpha_A$}(2);
\draw[->,bend right](1) to node [left]{$Ff$}(3);
\draw[->,bend left](1) to node [right]{$Fg$}(3);
\draw[->](2) to node [right]{$Gg$}(4);
\draw[->](3) to node [below]{$\alpha_B$}(4);
\draw[->](7) to node [above]{$\alpha_A$}(8);
\draw[->](7) to node [left]{$Ff$}(9);
\draw[->,bend right](8) to node [left]{$Gf$}(10);
\draw[->,bend left](8) to node [right]{$Gg$}(10);
\draw[->](9) to node [below]{$\alpha_B$}(10);
\end{tikzpicture}$$
\end{enumerate}
A pseudo-natural transformation is \textbf{2-natural} if the cells $\alpha_f$ are identities.
\end{define}

\begin{define}[\cite{Benabou}] \label{modification}  A \textbf{modification} $m\colon \theta\Rrightarrow \gamma$ of 2-natural transformations consists of, for each $A\in \mathfrak K_0$, a 2-cell $m_A\colon \theta_A \Rightarrow \gamma_A$ of $\mathfrak L$ satisfying the following compatibility condition:
\begin{enumerate}
\item[]{\textbf{Modification Condition.}} \label{modification condition} There is an equality of 2-cells
$$\begin{tikzpicture}
\node(1){$FA$};
\node(2)[node distance=1.6in, right of=1]{$GA$};
\node(3)[node distance=.8in, below of=1]{$FA'$};
\node(4)[node distance=.8in, below of=2]{$GA'$};
\node(5)[node distance=.8in, right of=1]{$\Uparrow m_A$};
\node(6)[node distance=.9in, below of=5]{$$};
\node(7)[node distance=.5in, below of=5]{$$};
\node(8)[node distance=1.7in, right of=7]{$=$};
\node(9)[node distance=1.75in, right of=2]{$FA$};
\node(10)[node distance=1.6in, right of=9]{$GA$};
\node(11)[node distance=.8in, below of=9]{$FA'$};
\node(12)[node distance=.8in, below of=10]{$GA'$};
\node(13)[node distance=.8in, right of=9]{$$};
\node(14)[node distance=.2in, below of=13]{$$};
\node(15)[node distance=.6in, below of=14]{$\Uparrow m_{A'}$};
\draw[->](2) to node [right]{$Gf$}(4);
\draw[->](1) to node [left]{$Ff$}(3);
\draw[->,bend left](1) to node [above]{$\gamma_A$}(2);
\draw[->,bend right](1) to node [below]{$\theta_A$}(2);
\draw[->,bend right](3) to node [below]{$\theta_{A'}$}(4);
\draw[->,bend left](9) to node [above]{$\gamma_A$}(10);
\draw[->](9) to node [left]{$Ff$}(11);
\draw[->,bend left](11) to node [above]{$\gamma_{A'}$}(12);
\draw[->,bend right](11) to node [below]{$\theta_{A'}$}(12);
\draw[->](10) to node [right]{$Gf$}(12);
\end{tikzpicture}$$
for each arrow $f\colon A\to A'$ of $\mathfrak K$.
\end{enumerate}
\end{define}
Throughout $[\mathfrak K,\mathfrak L]$ denotes the 2-category of 2-functors $\mathfrak K\to\mathfrak L$, their 2-natural transformations, and modifications between them.  In particular, $[\mathfrak K^{op},\mathfrak{Cat}]$ could be considered an appropriate 2-dimensional analogue of the ordinary category of presheaves on a small 1-category.  Pseudo-functors $\K\to\mathfrak{L}$, together with pseudo-natural transformations and modifications between them, form a 2-category, denoted $\Hom(\K,\mathfrak L)$.  Thus, in particular, $\Hom(\K^{op},\mathfrak{Cat})$ denotes the 2-category of category-valued pseudo-functors, pseudo-natural transformations, and modifications, another candidate for 2-dimensional presheaves. For covariant category-valued pseudo-functors use subscripted `$!$' to denote the transition morphisms and 2-cells; and use superscripted `$*$' for the contravariant pseudo-functors as in `$f_!$' and `$f^*$', respectively, for example.

\subsection{Connected Components}

\label{conn comp section}

Every set is a category whose objects and arrows are just the members of the set.  Such a category is ``discrete" and there is a ``discrete category" functor $disc\colon \mathbf{Set}\to\mathbf{Cat}$ of ordinary 1-categories.  The discrete category functor is right adjoint to the ``connected components" functor $\pi_0\colon \mathbf{Cat} \to\mathbf{Set}$ given by sending a category $\C$ to the set of its connected components.  In other words, $\pi_0\C$ is given as a coequalizer 
$$\begin{tikzpicture}
\pgfmathsetmacro{\shift}{0.3ex}
\node(1){$\C_1$};
\node(2)[node distance=.9in, right of=1]{$\C_0$};
\node(3)[node distance=.8in, right of=2]{$\pi_0\C$};
\draw[->](2) to node [above]{$$}(3);
\draw[transform canvas={yshift=0.5ex},->](1) to node [above]{$d_0$}(2);
\draw[transform canvas={yshift=-0.5ex},->](1) to node [below]{$d_1$}(2);
\end{tikzpicture}$$
of the domain and codomain functions coming with the category structure.  As observed, for example, in \S I,2.3 of \cite{GrayFormalCats}, there is a similar situation in dimension 2.  For 1-categories can be viewed as ``locally discrete" 2-categories in the sense that all 2-cells are identities.  This extends to a 2-functor $disc\colon \mathfrak{Cat}\to 2\text-\mathfrak{Cat}$.  Again $disc$ has a left adjoint, a ``connected components" functor, given by taking a 2-category $\mathfrak A$ to the 1-category $\pi_0\mathfrak A$, having the same objects and whose morphisms between say $A,B\in\mathfrak A$ are given by taking the connected components of the hom-category
\[ (\pi_0\mathfrak A)(A,B):=\pi_0\mathfrak A(A,B).
\]
In other words, $\pi_0\mathfrak A$ is given by taking connected components locally.  This construction also makes sense for bicategories.  In particular, it is discussed in \S 7.1 of \cite{Benabou} where it is called the ``Poincar\'e category" of the bicategory.

\subsection{Weighted Colimits}

In ordinary category theory, a colimit is an initial object in the category of cocones on a given diagram.  That is, if $D\colon \mathscr J\to\C$ is a functor on a small category $\mathscr J$, a cocone on $D$ with vertex $X\in\C_0$ is a natural transformation $\lambda\colon D\to \Delta X$; a morphism of cocones $m\colon \lambda\to\nu$ is a map of their vertices $f\colon X\to Y$ such that $f\lambda_i=\nu_i$ holds for each $i\in \mathscr J_0$; and finally, the colimit of the diagram $D$ is the initial object of the category of cocones, if it exists.

Now, let $\mathfrak C$ denote a 2-category.  Let $E\colon \mathfrak C\to\mathfrak{Cat}$ and $W\colon \mathfrak C^{op}\to\mathfrak{Cat}$ denote pseudo-functors.  There is a ``hom" 2-functor 
\[ \mathfrak{Cat}(E,-)\colon \mathfrak{Cat}\longrightarrow [\mathfrak C^{op},\mathfrak{Cat}]
\]
given by sending a small category $\X$ to the pseudo-functor 
\[ \mathfrak{Cat}(E,\X)\colon \mathfrak C^{op}\to\mathfrak{Cat}
\]  
given on objects by taking each $C$ of $\mathfrak C^{op}$ to the 1-category of functors and natural transformation $\mathfrak{Cat}(EC,\X)$.  The 2-functor $\mathfrak{Cat}(E,-)$ could also be viewed as taking its values in $\Hom(\mathfrak C^{op},\mathfrak{Cat})$ since every 2-functor is pseudo.  In general, a separate notation will not be used to indicate this change of target.  A 2-cocone on $E$ weighted by $W$ is a 2-natural transformation $W \to\cat(E,\X)$.  A pseudo-cocone on $E$ weighted by $W$ is a pseudo-natural transformation $W\to \mathfrak{Cat}(E,\X)$.  

Throughout ``pseudo" is taken as the primary notion.  And although the following definitions make sense for arbitrary $\K$, the development will be specialized to $\mathfrak K = \mathfrak{Cat}$.

\begin{define}[Weighted Pseudo-Colimit] \label{pscolim defn}  The \textbf{pseudo-colimit} of $E$ weighted by $W$ is a category $E\star W$ together with a pseudo-cocone $\xi\colon W\to \mathfrak{Cat}(E,E\star W)$ inducing an isomorphism of categories
\begin{equation} \label{ps colim up} \mathfrak{Cat}(E\star W,\X)\cong \Hom(\mathfrak C^{op},\mathfrak{Cat})(W,\mathfrak{Cat}(E,\X))
\end{equation}
for any small category $\X$.  A pseudo-colimit is conical if $W$ has $\mathbf 1$ as its only value.  It is finite if $\mathfrak C$ is a finite 2-category and each $WC$ is finitely-presentable.
\end{define}

The weakest colimit-concept considered here is the bicolimit, which relaxes the isomorphism of the pseudo-colimit to an equivalence of categories.  This is the standard weighted (co)limit concept in \cite{Elephant} whereas the weighted pseudo-colimits are referred to as ``strong" weighted limits.

\begin{define}[Weighted Bi-Colimit] \label{bicolim defn}  The \textbf{bi-colimit} of a pseudo-functor $E$ weighted by $W$ is a category $E\star_{bi} W$ together with a pseudo-cocone $\xi\colon W\to \mathfrak{Cat}(E,E\star W)$ inducing an equivalence of categories
\begin{equation} \label{bicolim up} \mathfrak{Cat}(E\star_{bi} W,\X)\simeq \Hom(\mathfrak C^{op},\mathfrak{Cat})(W,\mathfrak{Cat}(E,\X))
\end{equation}
for any small category $\X$.  A bi-colimit is conical if $W$ has $\mathbf 1$ as its only value.  It is finite if $\mathfrak C$ is a finite 2-category and each $WC$ is finitely-presentable.
\end{define}

In some ways, the weighted pseudo-colimits have an ``enriched flavor."  Bi(co)limits, on the other hand, since they are defined via equivalence of categories, are more properly 2-categorical insofar as equivalence is the appropriate notion of sameness in 2-category theory. Certainly, the last definition is appropriate also in the case that $\mathfrak C$ is a genuine bicategory.

\begin{remark}  In general, pseudo-(co)limits present bi-(co)limits.  However, it is not the case that bi-(co)limits are also pseudo-(co)limits.  A simple example illustrating this fact will be given in \S \ref{section comparing} as a result of the main theorem.
\end{remark}

\section{Weighted Colimits of Category-Valued Pseudo-Functors}

The goal of this section is to give the candidate for the weighted colimit computation.  This is done in \S \ref{colim candidate constru}.  Before that it is worthwhile to review the colimit computation result of \cite{SGA4V2}, since the main result should be seen as its 2-categorical generalization.  The first subsection recalls the basics on categories of fractions since this technique is used not only in the Grothendieck-Verdier result but also throughout the rest of the paper.

\subsection{Categories of Fractions}

The details of the category of fractions construction are given in \S I.1 of \cite{GZ} and \S\S 5.1-5.2 of \cite{Handbook1}.  The main result on the topic is the following.

\begin{theo} \label{cat of frac theorem} For any set of morphisms $\Sigma$ of a (small) category $\mathscr C$ there is a category $\mathscr C[\Sigma^{-1}]$ and a canonical functor 
\[ L_{\Sigma}\colon\mathscr C\to\mathscr C[\Sigma^{-1}]
\]
such that the image under $L$ of any morphism of $\Sigma$ is invertible.  This construction is universal in the sense that composition with $L$ induces an isomorphism of 1-categories
\[ \mathfrak{Cat}(\mathscr C[\Sigma^{-1}],\mathscr D)\cong \mathfrak{Cat}(\mathscr C,\mathscr D)_{\Sigma}
\]
where the subscripted `$\Sigma$' indicates the full subcategory of the functor category whose objects are those functors $\mathscr C\to\mathscr D$ inverting every arrow of $\Sigma$.
\end{theo}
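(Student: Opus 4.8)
The plan is to give the explicit ``zig-zag'' construction of the localization and then verify the universal property directly, first on functors and then on natural transformations. First I would build an auxiliary graph $G$ whose vertices are the objects of $\mathscr C$ and whose edges are all the morphisms of $\mathscr C$ together with one new formal edge $\bar s\colon B\to A$ for every $s\colon A\to B$ in $\Sigma$; since $\mathscr C$ is small and $\Sigma$ is a set, $G$ is small. Let $P(G)$ denote the free (path) category on $G$, and define $\mathscr C[\Sigma^{-1}]:=P(G)/{\sim}$, where $\sim$ is the smallest congruence on $P(G)$ (an equivalence relation on parallel paths, compatible with concatenation) generated by the relations $(g,f)\sim(gf)$ and $()_A\sim 1_A$ for composable morphisms $f,g$ and objects $A$ of $\mathscr C$, together with $(s,\bar s)\sim()_B$ and $(\bar s,s)\sim()_A$ for each $s\colon A\to B$ in $\Sigma$. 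The functor $L_\Sigma$ is the identity on objects and sends a morphism $f$ to the class of the length-one path $[f]$; functoriality is immediate from the first two families of relations, and the last two say exactly that $[s]$ is invertible with inverse $[\bar s]$, so $L_\Sigma$ inverts $\Sigma$.

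For the object part of the claimed isomorphism, I would show that $(-)\circ L_\Sigma$ is a bijection from functors $\mathscr C[\Sigma^{-1}]\to\mathscr D$ to functors $\mathscr C\to\mathscr D$ inverting $\Sigma$. Given such an $F$, define $\tilde F$ on objects by $\tilde F A=FA$ and on a path $(f_n,\dots,f_1)$ by the composite $(\tilde F f_n)\cdots(\tilde F f_1)$ in $\mathscr D$, where $\tilde F f=Ff$ if $f$ is a morphism of $\mathscr C$ and $\tilde F\bar s=(Fs)^{-1}$, which is legitimate because $F$ inverts $\Sigma$. This assignment respects all four families of generating relations (the first two because $F$ is a functor, the last two because $(Fs)^{-1}$ is a two-sided inverse of $Fs$), hence descends to a functor $\tilde F\colon\mathscr C[\Sigma^{-1}]\to\mathscr D$ with $\tilde F\circ L_\Sigma=F$. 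Uniqueness follows because every morphism of $\mathscr C[\Sigma^{-1}]$ is the class of a finite zig-zag of the generators $[f]$ and $[\bar s]$: any $G$ with $G L_\Sigma=F$ must send $[f]\mapsto Ff$ and, since $[\bar s]$ is forced to be the unique inverse of $G[s]=Fs$, also $[\bar s]\mapsto(Fs)^{-1}$, so $G=\tilde F$.

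To upgrade to an isomorphism of categories one repeats the argument one dimension up. Because $L_\Sigma$ is bijective (indeed the identity) on objects, any natural transformation $\tilde\alpha\colon\tilde F\Rightarrow\tilde{F'}$ with $\tilde\alpha\ast L_\Sigma=\alpha$ must have $\tilde\alpha_A=\alpha_A$; conversely, setting $\tilde\alpha_A:=\alpha_A$ does define a natural transformation, since naturality need only be checked on the generating morphisms: on $[f]$ it is naturality of $\alpha$ at $f$, and on $[\bar s]$ it follows from naturality of $\alpha$ at $s$ by conjugating with the isomorphisms $Fs$ and $F's$. The assignment $\alpha\mapsto\tilde\alpha$ visibly preserves identities and vertical composites, so $(-)\circ L_\Sigma$ is full and faithful; combined with the object bijection this gives the asserted isomorphism $\mathfrak{Cat}(\mathscr C[\Sigma^{-1}],\mathscr D)\cong\mathfrak{Cat}(\mathscr C,\mathscr D)_\Sigma$.

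The only genuinely delicate point is the bookkeeping around the congruence: one must check that $\sim$ is generated as a congruence (not merely as an equivalence relation) so that the quotient really is a category and $L_\Sigma$ really a functor, and one must be sure that every morphism is represented by a finite zig-zag of generators, which is what forces uniqueness of both $\tilde F$ and $\tilde\alpha$. All of this is routine and is carried out in detail in \S I.1 of \cite{GZ} and \S\S 5.1--5.2 of \cite{Handbook1}; smallness of $\mathscr C[\Sigma^{-1}]$ is clear from the construction, as $P(G)$ is small and a quotient of a small category is small.
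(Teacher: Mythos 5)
Your proposal is correct and follows exactly the construction the paper itself sketches (and defers to \S I.1 of Gabriel--Zisman and \S\S 5.1--5.2 of Borceux): the free category on $\mathscr C$ with formal inverses adjoined for $\Sigma$, modulo the evident congruence, with the universal property checked first on functors and then, using that $L_\Sigma$ is the identity on objects, on natural transformations. You have simply written out in full the standard argument the paper cites, so there is nothing to reconcile.
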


Proofs appear in the references.  But roughly speaking, the category $\mathscr C[\Sigma^{-1}]$ is the free category on $\C$ adjoined with certain formal inverses for the arrows of $\Sigma$, modulo the necessary equations imposing the desired invertibility. Thus, the objects of $\mathscr C[\Sigma^{-1}]$ are those of $\mathscr C$, but the arrows are certain ``formal zig-zags'' modulo equations.  

\begin{remark} Thus, to define a functor $\mathscr C[\Sigma^{-1}]\to\mathscr D$ it suffices to define one $\mathscr C\to\mathscr D$ that inverts all the morphisms of $\Sigma$.  A functor from the category of fractions is then canonically obtained making an appropriate commutative triangle.  That the universal property is further an isomorphism of 1-categories means that any natural transformation $F\Rightarrow G$ between functors $F,G\colon \mathscr C\rightrightarrows \mathscr D$ inverting the arrows of $\Sigma$ uniquely extends to one between the induced functors $\mathscr C[\Sigma^{-1}]\rightrightarrows\mathscr D$ as above.
\end{remark}

\begin{remark}[Calculus of Fractions \S I.2 \cite{GZ}] \label{calculus of fractions comment}  There will be occasion to refer to the so-called ``calculus of fractions," though not in an essential way.  This gives ``filtering conditions" on $\Sigma$ under which the category of fractions has a nicer description.  The objects of $\C[\Sigma^{-1}]$ will be the same, but the morphisms will be equivalence classes of spans whose left legs are in $\Sigma$.  A category of fractions arising in this way is easier to work with since the morphisms are not just formal zig-zags under some identifications coming from a quotient of a free category construction.
\end{remark}

\subsection{Computing Conical Pseudo-Colimits}

The discussion of \S 6.4.0 in \cite{SGA4V2} gives a closed-form expression of the pseudo-colimit of a pseudo-functor $F\colon \C\to\mathfrak{Cat}$ on a 1-category $\C$.  Let us recall the context of the result.

For each category-valued pseudo-functor $E\colon \C\to\cat$, on a 1-category, there is a well-known associated category of elements, or ``Grothendieck semi-direct product," namely a functor $\E\to\C$ that turns out to be a cloven opfibration as in \S B1.3 of \cite{Elephant}.  The source category $\E$ has as objects pairs $(C,X)$ with $C\in \C_0$ and $X\in EC$ and as morphisms $(C,x) \to (D,y)$ those pairs $(f,u)$ of morphisms with $f\colon C\to D$ of $\C$ and $u\colon f_!X\to Y$ of $ED$.   Notice that this generalizes the construction of the discrete opfibration associated to a given set-valued functor on a 1-category as detailed in \S II.6 and \S III.7 of \cite{MacLane}.  

Declare a morphism of $\E$ to be ``cartesian" if the component $u$ is invertible.  Let $\Sigma$ denote the set of such cartesian morphisms.  Consider $\mathbf{Cat}(\E,\D)_{\Sigma}$, the set of functors $\E\to\D$ inverting the arrows of $\Sigma$.  This establishes a functor
\[ \mathbf{Cat}(\E,-)_{\Sigma}\colon \mathbf{Cat}\to\mathbf{Set}.
\]
Another way of viewing the result on the existence of a category of fractions for $\Sigma$ is that this functor is representable.  The representing object is the category 
\[ \E[\Sigma^{-1}]
\]
obtained as the category of fractions by formally inverting the cartesian morphisms of $\E$.  The discussion of \S 6.4.0 in \cite{SGA4V2} shows that the representing object $\E[\Sigma^{-1}]$ is the pseudo-colimit of the original pseudo-functor $E\colon \C\to\cat$.

\begin{remark}  Exhibiting a construction merely as a category of fractions does not give a very useful description of its morphisms.  Accordingly, Proposition 6.4 and Proposition 6.5 of \cite{SGA4V2} give conditions under which the limit $\E[\Sigma^{-1}]$ admits a calculus of fractions as in Remark \ref{calculus of fractions comment}.  The condition is on the base category $\C$, requiring that it be pseudo-filtered in the sense of I.2.7 of \cite{SGA4}.  A higher-dimensional analogue appears in further work on filteredness conditions for flat category-valued pseudo-functors (see for example Chapter 5 of \cite{LambertThesis}).
\end{remark}

\subsection{Candidate for the Weighted Pseudo-Colimit}

\label{colim candidate constru}

Let $E\colon \mathfrak C\to\cat$ and $W\colon \mathfrak C^{op}\to\cat$ denote pseudo-functors on a small 2-category $\mathfrak C$.  Let $\Delta(E,W)$ denote the category with objects triples $(C,X,Y)$ with $C\in\mathfrak C_0$ and $X\in EC$ and $Y\in WC$; and with arrows $(C,X,Y)\to (D,A,B)$ those triples $(f,u,v)$ with $f\colon C\to D$ and $u\colon f_!X\to A$ and $v\colon Y\to f^*B$.  Call a morphism $(f,u,v)$ ``cartesian" if both $u$ and $v$ are invertible.  Composition and identities in $\Delta(E,W)$ are as in the category of elements of category-valued pseudo-functors.  Boost $\Delta(E,W)$ up to a 2-category as follows.  Declare a 2-cell $(f,u,v)\Rightarrow (g,x,y)$ to be one $\alpha\colon f\Rightarrow g$ of $\mathfrak{C}$ for which there are commutative triangles
$$\begin{tikzpicture}
\node(1){$$};
\node(2)[node distance=1in, right of=1]{$U$};
\node(3)[node distance=.4in, above of=1]{$f_!X$};
\node(4)[node distance=.4in, below of=1]{$g_!X$};
\node(5)[node distance=1.5in, right of=2]{$V$};
\node(6)[node distance=3.5in, right of=3]{$f^*Y$};
\node(7)[node distance=3.5in, right of=4]{$g^*Y$};
\draw[->](3) to node [left]{$(\alpha_!)_X$}(4);
\draw[->](3) to node [above]{$u$}(2);
\draw[->](4) to node [below]{$x$}(2);
\draw[->](5) to node [above]{$v$}(6);
\draw[->](5) to node [below]{$y$}(7);
\draw[->](6) to node [right]{$(\alpha^*)_Y$}(7);
\end{tikzpicture}$$ 
in $ED$ and $WC$, repectively.  Now, recall from \S \ref{conn comp section} that there is a connected components functor $\pi_0\colon 2\text-\mathfrak{Cat} \to \mathfrak{Cat}$ taking a 2-category $\mathfrak A$ to its 1-category of connected components, given by taking the $\pi_0$ in the usual sense of each hom-category $\mathfrak A(X,Y)$.  Now, the main result will be that  
\begin{equation} \label{colimit construction} E\star W\simeq \pi_0\Delta(E,W)[\Sigma^{-1}]
\end{equation}
that is, that the weighted pseudo-colimit is presented by the category on the right, given by first taking the connected components of the 2-category $\Delta(E,W)$ and then inverting $\Sigma$, the set of images of cartesian morphisms in the resulting 1-category.  The proof of Theorem \ref{MAIN THEOREM} shows this explicitly.  In the meantime, there will be an occasional slight abuse of notation by using $E\star W$ for the right side of \ref{colimit construction}.  For the computations in the next section, note that there is a canonical map $L\colon \Delta(E,W)\to E\star W$ viewing a morphism $(f,u,v)$ as a span with left leg identity.

\begin{remark}  Recall that for any 2- or pseudo-functor $E\colon \mathfrak C\to\mathfrak{Cat}$, there is a 2-category of elements $\mathfrak E$ equipped with a projection 2-functor $\Pi\colon \mathfrak E\to\mathfrak C$, as detailed in \S 1.4 of \cite{BirdThesis} and in \S I,2.5 of \cite{GrayFormalCats}.  This construction is a generalization of the category of elements for set- and category-valued functors on ordinary 1-categories, as reviewed above.  The 2-category of elements construction adds in certain 2-cells from the base 2-category making commutative triangles like those in the display above.  The triangle on the left is be correct for a covariant pseudo-functor whereas the triangle on the right is correct for a contravariant pseudo-functor.  The point of the ``diagonal category" $\Delta(E,W)$ is that it combines these 2-category of elements constructions along a ``diagonal indexing" by $\mathfrak C$ with the appropriate ``mixed variance" exhibited by the reflected shapes of the triangles above.  Note that this is not the 2-category of elements of a product bifunctor, which would not have this ``mixed variance."  The construction of \ref{colimit construction} thus generalizes Lawler's computation in \cite{LawlerThesis} which would treat only one pseudo-functor without its weight.
\end{remark}

\section{Weighted Colimits in 2-Categories of Pseudo-Functors}

The main result of the section, Theorem \ref{MAIN THEOREM}, shows that the colimit candidate of \S \ref{colim candidate constru} computes the pseudo-colimit of $E$ weighted by $W$.  The proof is a direct verification of the universal property as in Definition \ref{ps colim up}.  The first subsection \S \ref{assignments and up} gives the required functorial assignments and verifies the universal property directly.  This is purely technical work exhibiting how passing to connected components and the category of fractions provides the structure necessary to satisfy the required coherence conditions.  The second subsection \S \ref{main theo and corollaries} states the theorem formally and derives some consequences.  Most of the work of this section appeared in \cite{LambertThesis}.

\subsection{Assignments and Universal Property}

\label{assignments and up}

Throughout let $E\colon \mathfrak C\to\mathfrak{Cat}$ and $W\colon \mathfrak C^{op}\to\mathfrak{Cat}$ denote pseudo-functors on a small 2-category $\mathfrak C$.  Throughout use $E\star W$ to denote the colimit candidate of \S \ref{colim candidate constru}.

Now, begin to define a correspondence between categories
\begin{equation} \label{Phi 0}
\Phi\colon \mathfrak{Cat}(E\star W,\X)\longrightarrow \Hom(\mathfrak C^{op},\mathfrak{Cat})(W,\mathfrak{Cat}(E,\X)).
\end{equation} 
Start with a functor $F\colon E\star W\to \X$.  The image under $\Phi$ should be a pseudo-natural transformation $\Phi(F)$ whose components over $C\in \mathfrak C$ should be functors 
\begin{equation} \label{Phi 1}
\Phi(F)_C\colon WC \to \mathfrak{Cat}(EC,\X).
\end{equation}
To define such $\Phi(F)_C$, fix an object $Y\in WC$.  The image should be a functor $EC\to \X$.  For an object $X\in EC$, declare
\begin{equation}
\Phi(F)_C(Y)(X):= F(C,X,Y).
\end{equation}
And for an arrow $u\colon X\to Z$ of $EC$, the image under $\Phi(F)_C(Y)$ is taken to be the image under $F$ of $(1,u,1)$ viewed as a span in $E\star W$ with left leg identity, i.e., as the image of $(1,u,1_Y)$ of $\Delta(E,W)$ viewed in the category of fractions under the canonical map.  Of course this means that $\Phi(F)_C(Y)\colon EC\to \X$ is a functor since $F$ is one.

Now, finish the assignment of \ref{Phi 1}.  For an arrow $v\colon Y\to Z$ of $WC$, declare $\Phi(F)_C(v)$ to be the natural transformation $\Phi(F)_C(Y)\Rightarrow \Phi(F)_C(Z)$ whose components $\Phi(F)_C(v)_X$ are the images of the morphisms $(1,1_X,v)$ viewed as a span with left leg identity.  Naturality in $X\in EC$ and that $\Phi(F)_C$ is a functor both follow because $F$ is a functor.

Now, the components $\Phi(F)_C$ as in \ref{Phi 1} indexed over $C\in\mathfrak C_0$ comprise a pseudo-natural transformation.  To see this, required are invertible cells
$$\begin{tikzpicture}
\node(1){$WD$};
\node(2)[node distance=1.2in, right of=1]{$\mathfrak{Cat}(ED,\X)$};
\node(3)[node distance=.8in, below of=1]{$WC$};
\node(4)[node distance=.8in, below of=2]{$\mathfrak{Cat}(EC,\X)$};
\node(5)[node distance=.6in, right of=1]{$$};
\node(6)[node distance=.4in, below of=5]{$\cong$};
\draw[->](1) to node [above]{$$}(2);
\draw[->](1) to node [left]{$f^*$}(3);
\draw[->](2) to node [right]{$(f_!)^*$}(4);
\draw[->](3) to node [below]{$$}(4);
\end{tikzpicture}$$
for each $f\colon C\to D$ of $\mathfrak C$.  Such a cell should be a natural isomorphism with components indexed over $Y\in WD$.  For such $Y$, the component of the coherence isomorphism should be a natural isomorphism $\Phi(F)_C(f^*Y)\Rightarrow (f_!)^*\Phi(F)_D(Y)$ of functors $EC\to \X$.  For $X\in EC$, a component will be the image under $F$ of the arrow in $E\star W$ given by the span
$$\begin{tikzpicture}
\node(1){$(C,X,f^*Y)$};
\node(2)[node distance=1.4in, right of=1]{$(C,X,f^*Y)$};
\node(3)[node distance=1.6in, right of=2]{$(D,f_!X,Y).$};
\draw[->](2) to node [above]{$1$}(1);
\draw[->](2) to node [above]{$(f,1,1)$}(3);
\end{tikzpicture}$$ 
Note that the image of the span above upon passing to the category of fractions $E\star W$ is an isomorphism.  That these arrows amount to a natural isomorphism results from the fact that $F$ and the canonical morphisms $L$ are functors.  Now, the proposed components of the purported isomorphism in the square above should be natural in $Y\in WD$.  For $v\colon Y\to Z$ in $WD$, the naturality square commutes because $L$ and $F$ are functors.

\begin{lemma}  The components $\Phi(F)_C$ over $C\in \mathfrak C_0$ as in \ref{Phi 1}, with coherence isos as above, are a pseudo-natural transformation.  Thus, the object assigment for $\Phi$ as in \ref{Phi 0} is well-defined.
\end{lemma}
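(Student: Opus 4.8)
The plan is to verify the two Pseudo-Naturality conditions of Definition \ref{PSNATTRANSF} for the family $\Phi(F)_C$ equipped with the coherence isomorphisms constructed above, leaning throughout on the single fact that $F$ and the canonical localization map $L\colon\Delta(E,W)\to E\star W$ are functors, so that every identity we must verify in $\X$ is the $F$-image of an identity between composites of spans in $E\star W$, which in turn is the $L$-image of an identity in $\Delta(E,W)$.

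First I would record precisely what the coherence cell $\Phi(F)_f$ is: for $f\colon C\to D$ and $Y\in WD$, its component at $X\in EC$ is $F$ applied to the span $(C,X,f^*Y)\xleftarrow{1}(C,X,f^*Y)\xrightarrow{(f,1,1)}(D,f_!X,Y)$, and this is invertible in $E\star W$ because $(f,1,1)$ is cartesian (both the $E$- and $W$-components are identities, hence invertible) and $\Sigma$ has been inverted. I would note that this assignment is already checked above to be natural in $X$ and in $Y$, so $\Phi(F)_f$ is a well-defined invertible modification-component; what remains is the compatibility with composition in $\mathfrak C$ (Pseudo-Naturality 1) and with 2-cells of $\mathfrak C$ (Pseudo-Naturality 2).

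For Pseudo-Naturality 1, given composable $f\colon C\to D$ and $g\colon D\to D'$ in $\mathfrak C$, I would fix $Y\in WD'$ and $X\in EC$ and chase the two sides of the pasting equality down to spans in $E\star W$. The left side pastes $\Phi(F)_g$ at $(f^*$-data$)$ with $\Phi(F)_f$, which on components amounts to the composite of the $F$-images of $(C,X,f^*g^*Y)\xleftarrow{1}(C,X,f^*g^*Y)\xrightarrow{(f,1,1)}(D,f_!X,g^*Y)$ and $(D,f_!X,g^*Y)\xleftarrow{1}\cdot\xrightarrow{(g,1,1)}(D',g_!f_!X,Y)$; the right side is the $F$-image of the single span built from $(gf,1,1)$, suitably mediated by the pseudo-functoriality isomorphisms $\phi_{f,g}$ of $E$ and of $W$ (these enter because $f^*g^*Y\cong(gf)^*Y$ and $g_!f_!X\cong(gf)_!X$, and in $\Delta(E,W)$ these coherence isos are themselves cartesian morphisms). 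The key point is that the composite span on the left and the single span on the right become \emph{equal} morphisms in $\Delta(E,W)$ after accounting for exactly these $\phi$'s — this is a direct consequence of how composition in a 2-category of elements is defined (the same bookkeeping that already makes $\Delta(E,W)$ a category) — so $L$ sends them to the same arrow and $F$ sends that to the required equality in $\X$. For Pseudo-Naturality 2, given $\theta\colon f\Rightarrow g$ in $\mathfrak C$, the point is that $\theta$ induces, via the components $(\theta_!)_X$ and $(\theta^*)_Y$, a 2-cell in $\Delta(E,W)$ between the spans representing $\Phi(F)_f$ and $\Phi(F)_g$; after applying $\pi_0$ these spans land in the \emph{same} connected component, hence become a single arrow in $\pi_0\Delta(E,W)$ and a fortiori in $E\star W$, and applying $F$ collapses the two pastings in the Pseudo-Naturality 2 diagram to the same arrow of $\X$. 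I would also check the unit/normalization condition, which is immediate since $\Phi(F)_{1_C}$ is $F$ of an identity span.

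The main obstacle will be Pseudo-Naturality 1: one must name the mediating isomorphisms $\phi^E_{f,g}$ and $\phi^W_{g,f}$ explicitly, insert them into the span computation, and confirm that the two composite morphisms of $\Delta(E,W)$ one obtains genuinely coincide on the nose — i.e. that the ``mixed variance'' coherence of $\Delta(E,W)$ is exactly what the pseudo-naturality square demands. Once that bookkeeping is in place, Pseudo-Naturality 2 is comparatively easy because passage to $\pi_0$ trivializes it, and the naturality-in-$X$-and-$Y$ statements needed to see the $\Phi(F)_f$ are modification-components have already been dispatched above; assembling these gives the claimed pseudo-natural transformation and hence well-definedness of the object assignment of $\Phi$.
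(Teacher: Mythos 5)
Your proposal is correct and follows essentially the same route as the paper: Pseudo-Naturality 1 is reduced to a commutative diagram in $\Delta(E,W)$ built from $(f,1,1)$, $(g,1,1)$, $(gf,1,1)$ and the coherence isomorphisms of $E$ and $W$, then pushed through $L$ and $F$; Pseudo-Naturality 2 is handled by exhibiting the 2-cell $\theta$ of $\Delta(E,W)$ connecting the two composite arrows so that they are identified after passing to connected components. Your identification of the coherence bookkeeping in the first condition as the main obstacle, and of $\pi_0$ as what trivializes the second, matches the paper's proof exactly.
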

\begin{proof}  Condition 1 of the pseudo-natural transformation axioms in \ref{PSNATTRANSF} can be seen to hold in the following way.  Let $f\colon B\to C$ and $g\colon C\to D$ denote two arrows of $\mathfrak C$.  The equality of the corresponding 2-cells of the form of the first part of the condition then follows from the commutativity of the figure
$$\begin{tikzpicture}
\node(1){$(B,X,f^*g^*Y)$};
\node(2)[node distance=.7in, above of=1]{$$};
\node(3)[node distance=.3in, right of=2]{$(C,f_!X,g^*Y)$};
\node(4)[node distance=2.6in, right of=3]{$(D,g_!f_!X,Y)$};
\node(5)[node distance=.3in, right of=4]{$$};
\node(6)[node distance=.7in, below of=5]{$(D,(gf)_!X,Y)$};
\node(7)[node distance=.7in, below of=6]{$$};
\node(8)[node distance=1.6in, left of=7]{$(B,X,(gf)^*Y)$};
\draw[->](1) to node [left]{$(f,1,1)$}(3);
\draw[->](3) to node [above]{$(g,1,1)$}(4);
\draw[->](4) to node [right]{$(1,\cong,1)$}(6);
\draw[->](8) to node [below]{$\;\;\;\;\;\;\;\;\;\; (gf,1,1)$}(6);
\draw[->](1) to node [below]{$(1,1,\cong)\;\;\;\;\;\;\;\;\;\;$}(8);
\end{tikzpicture}$$
and the fact that the canonical map $L$ and the given $F$ are functors.

The second pseudo-naturality condition of \ref{PSNATTRANSF} is verified in the following way.  Start with a 2-cell $\alpha\colon f\Rightarrow g$ between arrows $f,g\colon C\rightrightarrows D$ of $\mathfrak C$.  The equality of 2-cells in the condition boils down to the commutativity of the square
$$\begin{tikzpicture}
\node(1){$(C,X,f^*Y)$};
\node(2)[node distance=1.6in, right of=1]{$(C,X,g^*Y)$};
\node(3)[node distance=.8in, below of=1]{$(D,f_!X,Y)$};
\node(4)[node distance=.8in, below of=2]{$(D,g_!X,Y)$};
\node(5)[node distance=.5in, right of=1]{$$};
\node(6)[node distance=.4in, below of=5]{$$};
\draw[->](1) to node [above]{$(1,1,\alpha)$}(2);
\draw[->](1) to node [left]{$(f,1,1)$}(3);
\draw[->](2) to node [right]{$(g,1,1)$}(4);
\draw[->](3) to node [below]{$(1,\alpha,1)$}(4);
\end{tikzpicture}$$
when reduced to connected components and subsequently to the category of fractions $E\star W$.  But this can be seen by exhibiting a path between the composite sides of the square, namely, $(f,\alpha,1)$ and $(g,1, \alpha)$.  The path is a 2-cell of $\Delta(E,W)$ between these two arrows.  Take $\alpha$ itself.  The commutative triangles
$$\begin{tikzpicture}
\node(1){$$};
\node(2)[node distance=1in, right of=1]{$g_!X$};
\node(3)[node distance=.4in, above of=1]{$f_!X$};
\node(4)[node distance=.4in, below of=1]{$g_!X$};
\node(5)[node distance=1.5in, right of=2]{$f^*Y$};
\node(6)[node distance=3.5in, right of=3]{$f^*Y$};
\node(7)[node distance=3.5in, right of=4]{$g^*Y$};
\draw[->](3) to node [left]{$(\alpha_!)_X$}(4);
\draw[->](3) to node [above]{$\;\;(\alpha_!)_X$}(2);
\draw[->](4) to node [below]{$1$}(2);
\draw[->](5) to node [above]{$1$}(6);
\draw[->](5) to node [below]{$(\alpha^*)_Y\;$}(7);
\draw[->](6) to node [right]{$(\alpha^*)_Y$}(7);
\end{tikzpicture}$$
show precisely that $\alpha\colon (f,\alpha,1)\Rightarrow (g,1, \alpha)$ is such a 2-cell, hence a path in the localization $E\star W$, meaning that the two arrows in the commutative square reduce to the same class in the localization.  Thus, the images of these classes under $F$ are equal, proving the condition.  \end{proof}

Now, continue the assignments for \ref{Phi 0}.  In particular, take a natural transformation $\alpha\colon F\Rightarrow G$ for functors $F,G\colon E\star W\rightrightarrows \X$.  The image under $\Phi$ should be a modification $\Phi(\alpha)$ with components
\begin{equation} \label{Phi 2}
\Phi(\alpha)_C\colon \Phi(F)_C\Rightarrow \Phi(G)_C
\end{equation}
indexed over $C\in\mathfrak C_0$.  Each such component should be a natural transformation with components 
\begin{equation}  \label{Phi 3}
\Phi(\alpha)_{C,Y}\colon \Phi(F)_C(Y)\Rightarrow \Phi(G)_C(Y)
\end{equation}
indexed by $Y\in WC$.  Further each such component should be a natural transformation
\begin{equation} \label{Phi 4}
\Phi(\alpha)_{C,Y,X}\colon \Phi(F)_C(Y)(X)\Rightarrow \Phi(G)_C(Y)(X)
\end{equation}
indexed over $X\in EC$.  Unpacking the last condition from the definitions, this means that $\Phi(\alpha)_{C,Y,X}$ ought to be an arrow of $\X$ of the form $F(C,X,Y)\to G(C,X,Y)$.  Thus, make the definition
\begin{equation} \label{Phi 5}
\Phi(\alpha)_{C,Y,X}:= \alpha_{C,X,Y} \colon \Phi(F)_C(Y)(X)\to \Phi(G)_C(Y)(X).
\end{equation}
That the collections indicated by the displays \ref{Phi 3} and \ref{Phi 4} are natural in their proper variables follows from the definition in \ref{Phi 5} by the naturality of $\alpha$.  What remains to check is that the components of \ref{Phi 2} comprise a modification.

\begin{lemma}  The arrow assignment for $\Phi$ with components $\Phi(\alpha)_C$ over $C\in\mathfrak C$ as in \ref{Phi 2}  is a modification.  In particular, the arrow assignment for $\Phi$ of \ref{Phi 0} is well-defined.
\end{lemma}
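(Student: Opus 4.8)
The plan is to unwind the modification condition for $\Phi(\alpha)$ all the way down to the level of morphisms of $\X$, and to recognise the resulting equations as nothing more than instances of the naturality of $\alpha$. That each component $\Phi(\alpha)_C$ is a genuine natural transformation $\Phi(F)_C\Rightarrow\Phi(G)_C$ of functors $WC\to\mathfrak{Cat}(EC,\X)$ has already been observed from \ref{Phi 5} and the naturality of $\alpha$; since $\Phi(F)$ and $\Phi(G)$ are pseudo-natural rather than $2$-natural, the remaining content is the modification condition in the variant appropriate to pseudo-natural transformations (cf.\ Definition \ref{modification}): for each arrow $f\colon C\to D$ of $\mathfrak C$, the whiskered components of $\Phi(\alpha)$ must intertwine the pseudo-naturality coherence isomorphisms $\Phi(F)_f$ and $\Phi(G)_f$ constructed just before the first lemma, i.e.\ there is an equality
\[
\big((f_!)^{*}\ast\Phi(\alpha)_D\big)\,\Phi(F)_f \;=\; \Phi(G)_f\,\big(\Phi(\alpha)_C\ast f^{*}\big)
\]
of $2$-cells $\Phi(F)_C\circ f^{*}\Rightarrow (f_!)^{*}\circ\Phi(G)_D$ in $\mathfrak{Cat}$.

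First I would evaluate this equality at an object $Y\in WD$, reducing it to an equality of natural transformations between functors $EC\to\X$, and then at an object $X\in EC$, reducing it to an equality of morphisms of $\X$ with domain $F(C,X,f^{*}Y)$ and codomain $G(D,f_!X,Y)$. Unwinding the definitions: by \ref{Phi 5} the relevant components of $\Phi(\alpha)$ are $\alpha_{C,X,f^{*}Y}$ and $\alpha_{D,f_!X,Y}$; and by construction the $(Y,X)$-component of $\Phi(F)_f$ is the image under $F$ of the class in $E\star W$ of the span $(C,X,f^{*}Y)\xleftarrow{1}(C,X,f^{*}Y)\xrightarrow{(f,1,1)}(D,f_!X,Y)$ — that is, of the morphism $L(f,1,1)\colon (C,X,f^{*}Y)\to (D,f_!X,Y)$, which is invertible in the localization because $(f,1,1)$ is cartesian — while the $(Y,X)$-component of $\Phi(G)_f$ is $G\big(L(f,1,1)\big)$. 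Hence the fully unwound equation is exactly commutativity of the square with horizontal edges $\alpha_{C,X,f^{*}Y}$, $\alpha_{D,f_!X,Y}$ and vertical edges $F\big(L(f,1,1)\big)$, $G\big(L(f,1,1)\big)$.

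That square is precisely the naturality square of $\alpha\colon F\Rightarrow G$ at the morphism $L(f,1,1)$ of $E\star W$, so it commutes; this establishes the modification condition, and with the already-noted naturality of the various intermediate collections, the lemma. The one place calling for care — and essentially the only one — is the bookkeeping: inserting the coherence cells $\Phi(F)_f$, $\Phi(G)_f$ with the correct orientation and whiskering in the displayed pasting equality, and checking that evaluating such a pasting composite of $2$-cells in $\mathfrak{Cat}$ at objects really does produce the stated equation in $\X$. No subtlety from the category of fractions intervenes, since the spans in play all arise from morphisms of $\Delta(E,W)$ via the canonical functor $L$, and the naturality of $\alpha$ is available for every morphism of $E\star W$, in particular for those in the image of $L$.
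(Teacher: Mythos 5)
Your proposal is correct and follows essentially the same route as the paper: both reduce the modification condition, by evaluating at $Y\in WD$ and then $X\in EC$, to the commutativity of the square with horizontal edges $\alpha_{C,X,f^{*}Y}$, $\alpha_{D,f_!X,Y}$ and vertical edges the images of $(f,1,1)$ under $F$ and $G$, which is exactly a naturality square for $\alpha$ at that morphism of $E\star W$. Your extra bookkeeping about the coherence cells and the canonical functor $L$ is consistent with, and slightly more explicit than, the paper's argument.
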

\begin{proof}  Let $f\colon C\to D$ denote an arrow of $\mathfrak C$.  The modification condition in Definition \ref{modification} requires equality of two composite 2-cells making two sides of a cylindrical figure.  Chasing $Y\in WD$ around each composite reveals that the equality will follow from commutativity of the square
$$\begin{tikzpicture}
\node(1){$F(C,X,f^*Y)$};
\node(2)[node distance=1.8in, right of=1]{$F(C,X,f^*Y)$};
\node(3)[node distance=.8in, below of=1]{$F(D,f_!X,Y)$};
\node(4)[node distance=.8in, below of=2]{$F(D,f_!X,Y)$};
\node(5)[node distance=.5in, right of=1]{$$};
\node(6)[node distance=.4in, below of=5]{$$};
\draw[->](1) to node [above]{$\alpha_{C,X,f^*Y}$}(2);
\draw[->](1) to node [left]{$F(f,1,1)$}(3);
\draw[->](2) to node [right]{$F(f,1,1)$}(4);
\draw[->](3) to node [below]{$\alpha_{D,f_!X,Y}$}(4);
\end{tikzpicture}$$
But this is commutative in $\X$ because it is a naturality square for $\alpha$ at the morphism $(f,1,1)$.  \end{proof}

\begin{lemma}  The assignments giving $\Phi$ of \ref{Phi 0} are functorial.
\end{lemma}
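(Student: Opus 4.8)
The plan is to verify that $\Phi$ respects composition and identities of functors $E\star W\to\X$, which decomposes into checking agreement at each stratum of the data that a modification carries, namely the object assignments $\Phi(\bullet)_C(Y)(X)$, the arrow assignments $\Phi(\bullet)_C(Y)(u)$ and $\Phi(\bullet)_C(v)_X$, and the coherence isomorphisms of the pseudo-natural transformation $\Phi(\bullet)_C$. First I would treat the object assignment for $\Phi$: given composable natural transformations $\alpha\colon F\Rightarrow G$ and $\beta\colon G\Rightarrow H$, the definition \ref{Phi 5} gives $\Phi(\beta\alpha)_{C,Y,X}=(\beta\alpha)_{C,X,Y}=\beta_{C,X,Y}\alpha_{C,X,Y}=\Phi(\beta)_{C,Y,X}\Phi(\alpha)_{C,Y,X}$, where the middle equality is just vertical composition of natural transformations computed componentwise in $\X$; the identity case is immediate since $\Phi(1_F)_{C,Y,X}=(1_F)_{C,X,Y}=1_{F(C,X,Y)}$. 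Composition and identities of modifications in $\Hom(\mathfrak C^{op},\mathfrak{Cat})(W,\mathfrak{Cat}(E,\X))$ are likewise computed componentwise, so this already shows $\Phi$ is a functor on the level of morphisms once we know it is well-defined on objects and arrows, which the preceding three lemmas have established.

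The only genuinely substantive point — and the one I expect to be the main obstacle — is confirming that the object assignment $F\mapsto\Phi(F)$ itself is functorial in the weak sense appropriate to pseudo-natural transformations: that $\Phi(F)$ as a whole object of the functor 2-category is determined by $F$ in a way compatible with the categorical structure. But since $\Phi$ is being exhibited as an honest functor between 1-categories (the two hom-categories of \ref{Phi 0}), not a pseudo-functor, there are no coherence cells to chase here; one simply observes that all the constituent pieces of $\Phi(F)$ — the component functors $\Phi(F)_C$, the coherence isomorphisms indexed by $f\colon C\to D$, and the modification components $\Phi(\alpha)_C$ — are built by applying $F$ (or $\alpha$) to specific morphisms and spans of $E\star W$, so that functoriality of $\Phi$ reduces entirely to functoriality of the assignment ``apply $F$,'' together with the bookkeeping that this assignment commutes with the canonical localization map $L\colon\Delta(E,W)\to E\star W$.

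Concretely, I would organize the argument as: (1) recall that objects and morphisms of the target hom-category are, respectively, pseudo-natural transformations $W\Rightarrow\mathfrak{Cat}(E,-)$ evaluated at $\X$ and modifications between them, with composition and identities inherited pointwise from $\mathfrak{Cat}$; (2) note that each datum of $\Phi(F)$ is $F$ post-composed with the canonical map $L$ applied to a fixed span in $\Delta(E,W)$ that does not depend on $F$ — explicitly, $\Phi(F)_C(Y)(X)=F(C,X,Y)$, $\Phi(F)_C(Y)(u)=F(L(1,u,1))$, $\Phi(F)_C(v)_X=F(L(1,1,v))$, and the coherence iso at $f$ is $F$ applied to the image of the span $(1,(f,1,1))$; (3) conclude that for composable $F\xrightarrow{\alpha}G\xrightarrow{\beta}H$ and for identities, every component of $\Phi(\beta\alpha)$ and $\Phi(1_F)$ agrees with the corresponding component of $\Phi(\beta)\Phi(\alpha)$ and of the identity modification on $\Phi(F)$, by the componentwise description of composition in step (1) and the fact that $F$, $G$, $H$ are functors. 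The verification is thus entirely formal, the ``hard part'' being merely the care needed to match up the stratified indexing of modifications with the corresponding stratified description of $\Phi$; no new idea beyond what the previous lemmas supply is required.

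\begin{proof}
Recall that in the functor 2-category $\Hom(\mathfrak C^{op},\mathfrak{Cat})$, the hom-category $\Hom(\mathfrak C^{op},\mathfrak{Cat})(W,\mathfrak{Cat}(E,\X))$ has pseudo-natural transformations as objects and modifications as arrows, with vertical composition and identities of modifications computed pointwise in $\mathfrak{Cat}$, hence ultimately componentwise in $\X$. By the preceding three lemmas the object and arrow assignments of $\Phi$ land in this category. It remains to check that $\Phi$ preserves composition and identities of natural transformations $F,G,H\colon E\star W\to\X$.

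Every piece of data comprising $\Phi(F)$ is obtained by applying $F$ to a morphism of $E\star W$ that is the image under the canonical localization map $L\colon\Delta(E,W)\to E\star W$ of a span in $\Delta(E,W)$ depending only on the indexing data, not on $F$: on objects $\Phi(F)_C(Y)(X)=F(C,X,Y)$; on arrows $u\colon X\to Z$ of $EC$ one has $\Phi(F)_C(Y)(u)=F\bigl(L(1,u,1_Y)\bigr)$; on arrows $v\colon Y\to Z$ of $WC$ the component $\Phi(F)_C(v)_X=F\bigl(L(1,1_X,v)\bigr)$; and the coherence isomorphism of $\Phi(F)_C$ at $f\colon C\to D$ has $X$-component $F$ applied to the image of the span with left leg $1$ and right leg $(f,1,1)$. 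Likewise, for a natural transformation $\alpha\colon F\Rightarrow G$, the modification $\Phi(\alpha)$ has $\Phi(\alpha)_{C,Y,X}=\alpha_{C,X,Y}$ by \ref{Phi 5}.

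Now let $\alpha\colon F\Rightarrow G$ and $\beta\colon G\Rightarrow H$ be composable. Since composition of modifications and vertical composition of natural transformations are componentwise, for all $C\in\mathfrak C_0$, $Y\in WC$, and $X\in EC$ we compute
\[
\Phi(\beta\alpha)_{C,Y,X}=(\beta\alpha)_{C,X,Y}=\beta_{C,X,Y}\,\alpha_{C,X,Y}=\Phi(\beta)_{C,Y,X}\,\Phi(\alpha)_{C,Y,X}=\bigl(\Phi(\beta)\Phi(\alpha)\bigr)_{C,Y,X},
\]
so $\Phi(\beta\alpha)=\Phi(\beta)\Phi(\alpha)$. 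Similarly $\Phi(1_F)_{C,Y,X}=(1_F)_{C,X,Y}=1_{F(C,X,Y)}$, which is the corresponding component of the identity modification on $\Phi(F)$; hence $\Phi(1_F)=1_{\Phi(F)}$. Therefore $\Phi$ is a functor.
\end{proof}
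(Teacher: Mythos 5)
Your proof is correct and takes essentially the same approach as the paper, which simply observes that functoriality follows from the componentwise definitions of composition of natural transformations and of modifications. Your version spells out the componentwise computation $\Phi(\beta\alpha)_{C,Y,X}=(\beta\alpha)_{C,X,Y}=\Phi(\beta)_{C,Y,X}\Phi(\alpha)_{C,Y,X}$ explicitly, but the underlying argument is the same.
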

\begin{proof}  This follows by the definition of composition of natural transformations on the one hand and of modifications on the other.  \end{proof}

Now, begin assignments for a reverse correspondence, namely, what will be a functor 
\begin{equation} \label{Psi 0}
\Psi\colon \Hom(\mathfrak C^{op},\mathfrak{Cat})(W,\mathfrak{Cat}(E,\X))\longrightarrow \mathfrak{Cat}(E\star W,\X).
\end{equation}
Start with a pseudo-natural transformation $\theta\colon W\to \mathfrak{Cat}(E,\X)$ of the domain.  The image $\Psi(\theta)$ will be a functor; by Theorem \ref{cat of frac theorem} it can be induced from the underlying category $\pi_0\Delta(E,W)$ of $E\star W$ using the universality of the category of fractions construction.  To this end, define
\begin{equation} \label{Psi 1}\Psi(\theta)\colon \pi_0\Delta(E,W)\longrightarrow \X
\end{equation}
in the following way.  On an object $(C,X,Y)$ of the domain, take
\begin{equation} \label{Psi 2}
\Psi(\theta)(C,X,Y) := \theta_C(Y)(X).
\end{equation}
Now, for an arrow assignment, observe first that since $\theta$ is pseudo-natural, it comes with coherence isomorphisms for each arrow $f\colon C\to D$ of $\mathfrak C$ of the form
$$\begin{tikzpicture}
\node(1){$WD$};
\node(2)[node distance=1.2in, right of=1]{$\mathfrak{Cat}(ED,\X)$};
\node(3)[node distance=.7in, below of=1]{$WC$};
\node(4)[node distance=.7in, below of=2]{$\mathfrak{Cat}(EC,\X)$};
\node(5)[node distance=.5in, right of=1]{$$};
\node(6)[node distance=.35in, below of=5]{$\cong$};
\draw[->](1) to node [above]{$\theta_D$}(2);
\draw[->](1) to node [left]{$f^*$}(3);
\draw[->](2) to node [right]{$f_!$}(4);
\draw[->](3) to node [below]{$\theta_C$}(4);
\end{tikzpicture}$$
Denote such a coherence isomorphism by $\theta_f$.  Thus, for a morphism $(f,u,v)$ of $\Delta(E,W)$ with morphisms $u\colon f_!X\to U$ and $v\colon Y\to f^*V$ of the appropriate fibers, take $\Psi(\theta)(f,u,v)$ to be the composite morphism
$$\begin{tikzpicture}
\node(1){$\theta_C(Y)(X)$};
\node(2)[node distance=1.6in, right of=1]{$\theta_C(f^*V)(X)$};
\node(3)[node distance=1.6in, right of=2]{$\theta_D(V)(f_!X)$};
\node(4)[node distance=1.6in, right of=3]{$\theta_D(V)(U)$};
\draw[->](1) to node [above]{$\theta_C(v)_X$}(2);
\draw[->](2) to node [above]{$\theta_{f,V,X}$}(3);
\draw[->](3) to node [above]{$\theta_D(V)(u)$}(4);
\end{tikzpicture}$$
of $\X$.  It must be shown that this induces a well-defined assigment when passing to connected components.

\begin{lemma}  The arrow assignment immediately above is well-defined on connected components.  Additionally, the induced assignment on $\pi_0\Delta(E,W)$ gives a functor $\Psi(\theta)$ as in \ref{Psi 1}.
\end{lemma}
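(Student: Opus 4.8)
The plan is to verify in turn that the three-term formula defining $\Psi(\theta)(f,u,v)$ is constant on each connected component of each hom-category of $\Delta(E,W)$, and that the resulting assignment on $\pi_0\Delta(E,W)$ preserves identities and composition. Only the construction of $\pi_0\Delta(E,W)$ as a $1$-category is needed here; the universal property of Theorem \ref{cat of frac theorem} will be invoked only afterwards, to factor the functor through the localization $E\star W$ once it is known to invert $\Sigma$.

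For the descent to connected components, recall that $\pi_0\Delta(E,W)$ has the same objects as $\Delta(E,W)$ and that two parallel arrows give the same morphism of $\pi_0\Delta(E,W)$ exactly when linked by a finite zig-zag of $2$-cells; so it is enough to treat a single $2$-cell $\alpha\colon (f,u,v)\Rightarrow(g,x,y)$, necessarily with common codomain $(D,U,V)$. Its two defining commutative triangles read $u = x\circ(\alpha_!)_X$ in $ED$ and $y = (\alpha^*)_V\circ v$ in $WC$. Substituting these into the corresponding composites and using that $\theta_C\colon WC\to\cat(EC,\X)$ and $\theta_D(V)\colon ED\to\X$ are functors, both $\Psi(\theta)(f,u,v)$ and $\Psi(\theta)(g,x,y)$ pick up the common outer factors $\theta_C(v)_X$ and $\theta_D(V)(x)$, and their equality reduces to
\[ \theta_D(V)\big((\alpha_!)_X\big)\circ\theta_{f,V,X} \;=\; \theta_{g,V,X}\circ\theta_C\big((\alpha^*)_V\big)_X . \]
This is precisely the $(V,X)$-component of the second pseudo-naturality axiom of Definition \ref{PSNATTRANSF} applied to $\theta$ at the $2$-cell $\alpha$, once one notes that $W$ sends $\alpha$ to $\alpha^*$ and that the $2$-cell part of $\cat(E,-)$ acts by whiskering with $\alpha_!$. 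Since all pseudo-functors are normalized, $(1_C)_!$ and $(1_C)^*$ are identities and $1_{(C,X,Y)}=(1_C,1_X,1_Y)$; its three-term formula is then a composite of functor-images of identity arrows together with the unit coherence $\theta_{1_C}$, which is itself an identity by the normalization convention. Hence the assignment descends to $\pi_0\Delta(E,W)$ and preserves identities.

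It remains to check functoriality on composites. Take composable arrows $(f,u,v)$ and $(g,s,t)$ of $\Delta(E,W)$ with composite $(gf,\bar u,\bar v)$, where $\bar u$ is the usual composite of $u$, the $g_!$-image of the $E$-component of the second arrow and the coherence $\phi^E_{f,g}$, and likewise $\bar v$ is built from $v$, the $f^*$-image of the $W$-component of the second arrow and $\phi^W_{g,f}$, exactly as in the category of elements. Writing out $\Psi(\theta)(g,s,t)\circ\Psi(\theta)(f,u,v)$ as a six-term composite in $\X$, the strategy is to slide the $\theta$-functor-image maps past the coherence cells $\theta_f$ and $\theta_g$ using their naturality, to collapse adjacent functor images by functoriality, and to invoke the first pseudo-naturality axiom of Definition \ref{PSNATTRANSF} for the composable pair $(f,g)$: it is exactly this last step that inserts the coherence isomorphisms $\phi^E_{f,g}$ and $\phi^W_{g,f}$ and identifies the pasted pair $\theta_g,\theta_f$ with $\theta_{gf}$. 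Collecting terms then yields $\Psi(\theta)(gf,\bar u,\bar v)$.

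I expect this composition step to be the main obstacle: it is a pasting computation that must deploy the cocycle condition (first pseudo-naturality axiom) in tandem with the pseudo-functoriality coherences of both $E$ and $W$, and it requires care about the direction in which each coherence isomorphism and each $\theta_h$ enters. By contrast, the descent to components is a single application of the second pseudo-naturality axiom, and the identity case is immediate from the normalization conventions.
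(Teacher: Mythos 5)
Your proposal is correct and follows essentially the same route as the paper: reduce the descent to a single $2$-cell, where the two commutative triangles of $\alpha$ together with functoriality of $\theta_C$ and $\theta_D(V)$ leave exactly the square that commutes by the second pseudo-naturality axiom, and then handle identities by normalization and composition by a pasting argument using naturality, the coherence of $E$ and $W$, and the first pseudo-naturality axiom. If anything, your write-up makes explicit (the displayed reduction to the $(V,X)$-component of Pseudo-Naturality 2, and the role of $\phi^E_{f,g}$, $\phi^W_{g,f}$ in the composition check) what the paper leaves as a ``tedious but straightforward'' diagram chase.
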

\begin{proof}  The first statement reduces to the case where $\alpha\colon (f,u,v)\Rightarrow (g,x,y)$ is a 2-cell of $\Delta(E,W)$ between arrows $(C,X,Y)\rightrightarrows (D,U,V)$.  The claim is that the top and bottom sides of the outside of the following figure are equal.  
$$\begin{tikzpicture}
\node(1){$\theta_C(f^*V)(X)$};
\node(2)[node distance=1.6in, right of=1]{$\theta_D(V)(f_!X)$};
\node(3)[node distance=.8in, below of=1]{$\theta_C(g^*V)(X)$};
\node(4)[node distance=.8in, below of=2]{$\theta_D(V)(g_!X)$};
\node(5)[node distance=.8in, right of=1]{$$};
\node(6)[node distance=.4in, below of=5]{$$};q
\node(7)[node distance=2.6in, right of=6]{$\theta_D(V)(U)$};
\node(8)[node distance=2.6in, left of =6]{$\theta_C(Y)(X)$};
\draw[->](1) to node [above]{$\theta_f$}(2);
\draw[->,dashed](1) to node [left]{$\theta_C(\alpha)_X$}(3);
\draw[->,dashed](2) to node [right]{$\theta_D(V)(\alpha)$}(4);
\draw[->](3) to node [below]{$\theta_g$}(4);
\draw[->](8) to node [above]{$\theta_C(v)_X$}(1);
\draw[->](8) to node [below]{$\theta_C(y)_X$}(3);
\draw[->](2) to node [above]{$\theta_D(V)(u)$}(7);
\draw[->](4) to node [below]{$\theta_D(V)(x)$}(7);
\end{tikzpicture}$$
But this is immediate.  For the dashed vertical arrows give a square in the center that commutes by the second coherence condition for $\theta_f$ and $\theta_g$ in \ref{PSNATTRANSF} and the two triangles are the images of the commutative triangles coming with the 2-cell $\alpha$ under $\theta_C$ and under $\theta_D(V)$, respectively.  Thus any two such arrows connected by such a 2-cell $\alpha$ are in the same path class.  Since an arbitrary path is just alternating 2-cells of this form, this special case proves the first claim.

Therefore, the assignments for $\Psi$ induce assignments on $\pi_0\Delta(E,W)$.  That the arrow assignment is functorial also follows.  The unit condition is trivial.  That the assignment respects composition is involved but ultimately straightforward.  One sets up a triangular figure each of whose sides is a three-fold composite of morphisms arising as in the arrow assignment.  The claim is that one side of the triangle is equal to the composite of the other two.  This can be seen by filling in the figure with the various naturality and coherence conditions, a tedious but straightforward task.  \end{proof}

\begin{cor}  The functor $\Psi(\theta)\colon \pi_0\Delta(E,W)\to \X$ inverts the images of cartesian morphisms, hence induces a functor on the category of fractions, also denoted by $\Psi(\theta)\colon E\star W\to\X$.  In particular the object assignment of $\Psi$ above in \ref{Psi 0} is well-defined.
\end{cor}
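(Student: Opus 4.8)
The plan is to unwind the definition of $\Psi(\theta)$ on a cartesian morphism, observe that its value is a composite of three isomorphisms, and then invoke the universal property of the category of fractions from Theorem~\ref{cat of frac theorem}.

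First I would recall that a cartesian morphism of $\Delta(E,W)$ is a triple $(f,u,v)$ in which both $u\colon f_!X\to U$ and $v\colon Y\to f^*V$ are invertible, and that by the preceding lemma $\Psi(\theta)$ sends the path class of $(f,u,v)$ in $\pi_0\Delta(E,W)$ to the composite arrow of $\X$
$$\theta_D(V)(u)\circ\theta_{f,V,X}\circ\theta_C(v)_X.$$
Then I would argue that each of the three factors is invertible. The middle factor $\theta_{f,V,X}$ is a component of the coherence isomorphism $\theta_f$ attached to the pseudo-natural transformation $\theta$, hence invertible by the very definition of pseudo-natural transformation in \ref{PSNATTRANSF}. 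The outer factor $\theta_D(V)(u)$ is the image under the functor $\theta_D(V)\colon ED\to\X$ of the isomorphism $u$, hence an isomorphism since functors preserve isomorphisms. Finally $\theta_C(v)\colon \theta_C(Y)\Rightarrow\theta_C(f^*V)$ is the image under the functor $\theta_C\colon WC\to\mathfrak{Cat}(EC,\X)$ of the isomorphism $v$, hence an \emph{invertible} natural transformation, so its component $\theta_C(v)_X$ at $X$ is an isomorphism of $\X$. A composite of isomorphisms being an isomorphism, $\Psi(\theta)$ inverts the image in $\pi_0\Delta(E,W)$ of every cartesian morphism, i.e.\ it inverts every arrow of $\Sigma$.

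With that established, Theorem~\ref{cat of frac theorem} applies directly: composition with the localization functor $L_\Sigma\colon \pi_0\Delta(E,W)\to \pi_0\Delta(E,W)[\Sigma^{-1}]=E\star W$ induces a bijection onto the functors inverting $\Sigma$, so there is a unique functor $E\star W\to\X$ through which $\Psi(\theta)$ factors, and I would reuse the name $\Psi(\theta)$ for it. Since $\pi_0$ and the passage to the category of fractions both leave the objects untouched, this induced functor is still given on objects by $\Psi(\theta)(C,X,Y)=\theta_C(Y)(X)$, so the object assignment of $\Psi$ in \ref{Psi 0} is well-defined.

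I do not expect a genuine obstacle here; the argument is essentially a one-line consequence of the previous lemma together with the definition of ``cartesian''. The only mild care needed is bookkeeping: identifying $\Sigma$ with the set of images (under the canonical map $L$) of the cartesian morphisms of $\Delta(E,W)$, and keeping straight that invertibility of $v$ yields an invertible natural transformation $\theta_C(v)$ whose component at $X$—not $\theta_C(v)$ itself—is the arrow appearing in the displayed composite.
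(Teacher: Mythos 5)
Your argument is correct and is essentially the paper's own proof, just spelled out in more detail: both observe that $\Psi(\theta)(f,u,v)$ is a composite of the coherence isomorphism component $\theta_{f,V,X}$ with the images of the invertible $u$ and $v$ under the functors $\theta_D(V)$ and $\theta_C$, hence an isomorphism, and then invoke the universal property of the category of fractions. No gaps.
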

\begin{proof}  The main claim basically follows from the definition of the arrow assignment for $\Psi$.  For if $(f,u,v)$ is cartesian, then $u$ and $v$ are invertible and so are $\theta_C(v)$ and $\theta_D(V)(u)$.  Of course the components of $\theta_f$ are invertible.  Thus, $\Psi(\theta)(f,u,v)$ for such $(f,u,v)$ is invertible in $\X$.  \end{proof}

For an arrow assignment for $\Psi$, begin with a modification $m\colon \theta \Rrightarrow \gamma$ of two given pseudo-natural transformations $\theta,\gamma\colon W\rightrightarrows\mathfrak{Cat}(E,\X)$.  It suffices to induce the required natural transformation from the underlying category $\Delta(E,W)$.  Take an object $(C,X,Y)$.  The evident definition of the required pseudo-natural $\Psi(m)\colon \Psi(\theta)\Rightarrow \Psi(\gamma)$ is just
\begin{equation} \label{Psi 3}
\Psi(m)_{C,X,Y} := m_{C,Y,X}\colon \theta_C(Y)(X)\to \theta_C(Y)(X)
\end{equation}
that is, the $X$-component of the $Y$-component of the $C$-component of the modification $m$.

\begin{lemma}  The definition of \ref{Psi 3} defines a natural transformation.  Thus, in particular, the arrow assignment of $\Psi$ from \ref{Psi 0} is well-defined.  Additionally, $\Psi$, so defined, is a functor.
\end{lemma}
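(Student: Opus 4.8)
The plan is to induce the transformation $\Psi(m)$ from the underlying category $\Delta(E,W)$, exactly as $\Psi(\theta)$ and $\Psi(\gamma)$ themselves were, using that the quotient $\Delta(E,W)\to\pi_0\Delta(E,W)$ is bijective on objects and full, and then the universal property of the category of fractions, Theorem \ref{cat of frac theorem}, together with the remark following it. Concretely, a natural transformation $\Psi(\theta)\Rightarrow\Psi(\gamma)$ between functors $E\star W\to\X$ is the same as a natural transformation between their restrictions along the canonical $L\colon\Delta(E,W)\to E\star W$; so it suffices to give components indexed by the objects $(C,X,Y)$ of $\Delta(E,W)$ and to verify naturality only against the morphisms $(f,u,v)$ of $\Delta(E,W)$. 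The components are forced to be the arrows $m_{C,Y,X}\colon\theta_C(Y)(X)\to\gamma_C(Y)(X)$ of \ref{Psi 3}, since an arrow $\Psi(\theta)(C,X,Y)\to\Psi(\gamma)(C,X,Y)$ is precisely an arrow $\theta_C(Y)(X)\to\gamma_C(Y)(X)$.

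The key step is the naturality square at a morphism $(f,u,v)\colon(C,X,Y)\to(D,U,V)$. Recall from \ref{Psi 1} that $\Psi(\theta)(f,u,v)$ is the three-fold composite $\theta_D(V)(u)\circ\theta_{f,V,X}\circ\theta_C(v)_X$, and likewise for $\gamma$. Thus the naturality square for $\Psi(m)$ at $(f,u,v)$ splits into three squares laid side by side, and I would check each in turn: the first, comparing $\theta_C(v)_X$ with $\gamma_C(v)_X$ along $m_{C,Y,X}$ and $m_{C,f^*V,X}$, is the naturality square of the natural transformation $m_C\colon\theta_C\Rightarrow\gamma_C$ at the arrow $v\colon Y\to f^*V$ of $WC$, evaluated at $X\in EC$; the third, comparing $\theta_D(V)(u)$ with $\gamma_D(V)(u)$ along $m_{D,V,f_!X}$ and $m_{D,V,U}$, is the naturality square of the natural transformation $m_{D,V}\colon\theta_D(V)\Rightarrow\gamma_D(V)$ at the arrow $u\colon f_!X\to U$ of $ED$; and the middle one, comparing the coherence cells $\theta_{f,V,X}$ with $\gamma_{f,V,X}$ along $m_{C,f^*V,X}$ and $m_{D,V,f_!X}$, is exactly the modification condition of Definition \ref{modification}, in its extension to modifications of pseudo-natural transformations relating the coherence cells $\theta_f$ and $\gamma_f$ (as in \ref{PSNATTRANSF}), for $m$ at the arrow $f\colon C\to D$, evaluated at $V\in WD$ and $X\in EC$. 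The pasting of the three commuting squares is the desired naturality square; since the components $m_{C,Y,X}$ depend only on objects and $\Psi(\theta),\Psi(\gamma)$ are already known (by the preceding lemma and corollary) to be well-defined functors on $\pi_0\Delta(E,W)$ and on $E\star W$, commutativity for a representative $(f,u,v)$ yields commutativity for the corresponding class, so $\Psi(m)$ descends to a natural transformation on $\pi_0\Delta(E,W)$ and then, by Theorem \ref{cat of frac theorem}, to one on $E\star W$.

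Finally, functoriality of $\Psi$ as in \ref{Psi 0} is immediate from this componentwise description: the identity modification on $\theta$ has components $1_{\theta_C(Y)(X)}$, so $\Psi$ sends it to the identity natural transformation on $\Psi(\theta)$; and a vertical composite $n\circ m$ of modifications has components $n_{C,Y,X}\circ m_{C,Y,X}$, so $\Psi(n\circ m)$ is the natural transformation with components $n_{C,Y,X}\circ m_{C,Y,X}$, which is $\Psi(n)\circ\Psi(m)$, because vertical composition of modifications, of natural transformations in $\mathfrak{Cat}$, and of arrows of $\X$ are all computed componentwise. The only genuinely delicate point in the above is the bookkeeping of the extended (pseudo) modification condition so that its evaluation at the objects $V$ and $X$ matches the middle of the three squares on the nose; once the indices are lined up, everything is formal.
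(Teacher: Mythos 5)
Your proof is correct and takes essentially the same route as the paper's, which merely asserts that the naturality squares commute ``as a result of the modification condition'' and that functoriality is formal. Your decomposition of the naturality square at $(f,u,v)$ into three subsquares --- two coming from the naturality of the component transformations $m_C$ (at $v$) and $m_{D,V}$ (at $u$), and the middle one from the modification condition relating the coherence cells $\theta_f$ and $\gamma_f$ --- together with the descent through $\pi_0$ and the category of fractions, supplies exactly the detail the paper leaves implicit.
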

\begin{proof}  That the required naturality squares commutes is just a result of the modification condition \ref{modification condition} satisfied by $m$.  That $\Psi$ is a functor again follows by the definitions of the assignments and the definitions of composition of modifications and of natural transformations.  \end{proof}

\subsection{Main Theorem and Its Consequences}

\label{main theo and corollaries}

The main result of the paper is now the following.

\begin{theo}[Colimit Computation] \label{MAIN THEOREM} The functors $\Phi$ and $\Psi$ of \ref{Phi 0} and \ref{Psi 0} are mutually inverse.  In particular, for pseudo-functors $E\colon \mathfrak C\to\mathfrak{Cat}$ and $W\colon \mathfrak C^{op}\to\mathfrak{Cat}$, the category $E\star W \simeq \pi_0\Delta(E,W)[\Sigma^{-1}]$ as described in \S \ref{colim candidate constru} is the pseudo-colimit of $E$ weighted by $W$ in the sense that $\Phi$ and $\Psi$ thus provide an isomorphism
\[ \mathfrak{Cat}(\pi_0\Delta(E,W)[\Sigma^{-1}],\X)\cong \Hom(\mathfrak C^{op},\mathfrak{Cat})(W,\mathfrak{Cat}(E,\X))
\]
of categories for any small category $\X$, as described in Definition \ref{pscolim defn}.
\end{theo}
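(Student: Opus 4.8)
The plan is to show that the two functors $\Phi$ and $\Psi$ constructed in \S\ref{assignments and up} are mutually inverse. The universal property of the pseudo-colimit will then follow immediately, since its statement is literally the assertion that $\mathfrak{Cat}(E\star W,\X)$ and $\Hom(\mathfrak C^{op},\mathfrak{Cat})(W,\mathfrak{Cat}(E,\X))$ are isomorphic as categories, and $\Phi$, $\Psi$ have already been verified to be genuine functors by the lemmas above. So the entire content is the two composite identities $\Psi\Phi=\mathrm{id}$ and $\Phi\Psi=\mathrm{id}$, and it suffices to check these on objects and on arrows separately in each of the two 2-categories of data.

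First I would check $\Psi\Phi=\mathrm{id}$ on $\mathfrak{Cat}(E\star W,\X)$. Starting from a functor $F\colon E\star W\to\X$, one forms $\Phi(F)$, the pseudo-natural transformation with $\Phi(F)_C(Y)(X)=F(C,X,Y)$, and then $\Psi(\Phi(F))$, which on the object $(C,X,Y)$ of $\pi_0\Delta(E,W)$ returns $\Phi(F)_C(Y)(X)=F(C,X,Y)$; so the two functors agree on objects. On a morphism $(f,u,v)$ of $\Delta(E,W)$ the functor $\Psi(\Phi(F))$ is by definition the three-fold composite $\Phi(F)_D(V)(u)\circ (\Phi(F))_{f,V,X}\circ\Phi(F)_C(v)_X$; unwinding the definitions of \S\ref{assignments and up}, each of the three factors is $F$ applied to a span with identity left leg, namely the spans representing $(1,u,1)$, $(f,1,1)$, and $(1,1,v)$ respectively, whose composite in $\Delta(E,W)$ (hence in $E\star W$) is exactly $(f,u,v)$. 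Since $F$ and the canonical localization functor $L$ are both functors, this composite equals $F(f,u,v)$, giving $\Psi(\Phi(F))=F$ on all of $E\star W$ by the universal property of the category of fractions. For a natural transformation $\alpha\colon F\Rightarrow G$, the component of $\Psi(\Phi(\alpha))$ at $(C,X,Y)$ is $\Phi(\alpha)_{C,Y,X}=\alpha_{C,X,Y}$, so $\Psi(\Phi(\alpha))=\alpha$ as well; thus $\Psi\Phi=\mathrm{id}$.

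Next I would check $\Phi\Psi=\mathrm{id}$ on $\Hom(\mathfrak C^{op},\mathfrak{Cat})(W,\mathfrak{Cat}(E,\X))$. Given a pseudo-natural $\theta\colon W\to\mathfrak{Cat}(E,\X)$, one has $\Psi(\theta)(C,X,Y)=\theta_C(Y)(X)$, hence $\Phi(\Psi(\theta))_C(Y)(X)=\Psi(\theta)(C,X,Y)=\theta_C(Y)(X)$, so the components agree on objects. On an arrow $u\colon X\to Z$ of $EC$, $\Phi(\Psi(\theta))_C(Y)$ sends $u$ to $\Psi(\theta)$ of the span representing $(1,u,1_Y)$, which by the arrow assignment for $\Psi$ (with $f=1$, $v=1$, and the normalization hypothesis making $\theta_1$ an identity) is exactly $\theta_C(Y)(u)$; similarly the action on an arrow $v\colon Y\to Z$ of $WC$ and the coherence isomorphism $\theta_f$ are recovered from $\Psi(\theta)$ of the appropriate spans $(1,1_X,v)$ and $(f,1,1)$, using normality of $\theta$ so that $\theta_1$ is trivial. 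Hence $\Phi(\Psi(\theta))=\theta$ as a pseudo-natural transformation including all its coherence data. For a modification $m$, the component of $\Phi(\Psi(m))$ at $(C,Y,X)$ is $\Psi(m)_{C,X,Y}=m_{C,Y,X}$, so $\Phi\Psi=\mathrm{id}$ on arrows too.

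The main obstacle I anticipate is not any single hard computation but the bookkeeping in the $\Phi\Psi=\mathrm{id}$ direction: one must confirm that $\Phi$ recovers \emph{all} of the structure of $\theta$ — not only its object values and its action on arrows in the $E$- and $W$-directions, but precisely its coherence isomorphisms $\theta_f$ — from the restricted data that $\Psi(\theta)$ retains, and this is exactly where the normalization convention (that all pseudo-functors and pseudo-natural transformations are normalized, so that $\theta_1$ and the unit constraints are identities) is doing essential work. I would make sure to invoke that convention explicitly at the point where the span $(f,1,1)$ is decomposed, since otherwise a stray unit isomorphism $\phi_C$ or $\theta_1$ would obstruct the clean identification; once that is in place, every remaining step is a direct unwinding of the definitions of $\Phi$ and $\Psi$ together with functoriality of $F$ and of the localization map $L$.
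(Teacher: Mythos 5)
Your proposal is correct and follows exactly the route the paper takes: the paper's proof of Theorem \ref{MAIN THEOREM} simply asserts that $\Phi$ and $\Psi$ are mutually inverse ``by computation from the definitions given over \S \ref{assignments and up},'' and your object/arrow-level verification of $\Psi\Phi=\mathrm{id}$ and $\Phi\Psi=\mathrm{id}$ (including the decomposition of $(f,u,v)$ into the three spans and the recovery of the coherence isomorphisms $\theta_f$) is precisely that computation carried out explicitly. Your remark about where normalization is needed is a useful refinement the paper leaves implicit.
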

\begin{proof}  That $\Phi$ and $\Psi$ are inverse follows by computation from the definitions given over \S \ref{assignments and up}.  That $E\star W\simeq \pi_0\Delta(E,W)[\Sigma^{-1}]$ is the pseudo-colimit follows by Definition \ref{pscolim defn}.  \end{proof}

Here follow some consequences of the main result.  Notice that for $E$ and $\mathfrak C$ as in the statement, the pseudo-colimit construction extends to a 2-functor $E\star - \colon \Hom(\mathfrak C^{op},\mathfrak{Cat})\to\mathfrak{Cat}$.  The assignments on arrows and on 2-cells are the ones suggested by the construction of $E\star W$.

\begin{cor} \label{tensor-hom adjunction} The induced 2-functor $E\star-$ is left 2-adjoint to the 2-functor $\mathfrak{Cat}(E,-)$.
\end{cor}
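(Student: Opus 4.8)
The goal is to show that $E\star-$ is left 2-adjoint to $\cat(E,-)$. The plan is to upgrade the object-level isomorphism of Theorem \ref{MAIN THEOREM} to a 2-adjunction by verifying that the isomorphism is 2-natural in $\X$ and then invoking the standard correspondence between 2-adjunctions and isomorphisms of hom-categories 2-natural in both variables. First I would observe that the main theorem already supplies, for each small category $\X$, an isomorphism of categories
\[ \mathfrak{Cat}(E\star W,\X)\cong \Hom(\mathfrak C^{op},\mathfrak{Cat})(W,\mathfrak{Cat}(E,\X)), \]
and that this is manifestly 2-natural in $W$ as well, since the assignments $\Phi$ and $\Psi$ of \S \ref{assignments and up} were built uniformly from the diagonal category $\Delta(E,W)$ and its localization; any pseudo-natural transformation $W'\to W$ induces the evident functor $\Delta(E,W')\to\Delta(E,W)$, compatibly with $\Sigma$ and with $\pi_0$, hence a functor $E\star W'\to E\star W$, and chasing the definitions shows the square of isomorphisms commutes. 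So the content of the corollary is packaging, plus checking 2-naturality in $\X$.

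Second I would verify 2-naturality in $\X$. Given a functor $H\colon \X\to\Y$, post-composition gives $\cat(E\star W,\X)\to\cat(E\star W,\Y)$ on one side and, on the other, the map $\Hom(W,\cat(E,\X))\to\Hom(W,\cat(E,\Y))$ induced by whiskering with $\cat(E,H)\colon \cat(E,\X)\to\cat(E,\Y)$; that these commute with $\Phi$ is immediate from the formula $\Phi(F)_C(Y)(X)=F(C,X,Y)$, since both routes send $F$ to the assignment $(C,X,Y)\mapsto H F(C,X,Y)$, and similarly on morphisms and 2-cells. One must also check compatibility with natural transformations $H\Rightarrow H'$ of functors $\X\to\Y$, i.e.\ that the isomorphism respects the 2-cell structure of $\mathfrak{Cat}$; this again falls out of the componentwise definition of $\Phi$ and $\Psi$ on 2-cells given in \S \ref{assignments and up}. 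Having an isomorphism $\cat(E\star W,\X)\cong \Hom(W,\cat(E,\X))$ 2-natural in both $W$ and $\X$, I would then cite the standard fact (as in \S 7 of \cite{Handbook1} or the treatment in \cite{GrayFormalCats}) that such data is precisely a 2-adjunction $E\star-\dashv \cat(E,-)$, with unit and counit extracted in the usual way from the images of identities.

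Finally, a small point needing care: the corollary asserts that $E\star-$ is itself a 2-functor, not merely an assignment on objects, and that the adjunction is a genuine 2-adjunction rather than a biadjunction. I would confirm the 2-functoriality of $E\star-$ by noting that the construction $W\mapsto \pi_0\Delta(E,W)[\Sigma^{-1}]$ is visibly functorial in $W$ on the nose: a modification $m\colon \theta\Rrightarrow\gamma$ of pseudo-natural transformations $W'\to W$ yields a 2-cell between the induced functors $\Delta(E,W')\rightrightarrows\Delta(E,W)$ and hence, after $\pi_0$ and localization, a natural transformation $E\star W'\Rightarrow E\star W$, with strict preservation of composites and units because $\pi_0$ and the category-of-fractions functor are themselves strict. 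The main obstacle, such as it is, is not conceptual but bookkeeping: one must track that the coherence isomorphisms $\theta_f$ appearing in the definition of $\Psi$ in \ref{Psi 1} transform correctly under precomposition with a morphism of weights, so that the square asserting 2-naturality in $W$ commutes strictly and not merely up to isomorphism. This is where the strictness of passing to connected components does the real work, exactly as it did in the proof of Theorem \ref{MAIN THEOREM}.
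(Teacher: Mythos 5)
Your proposal is correct and follows essentially the same route as the paper, whose entire proof is the one-line observation that the isomorphism of Theorem \ref{MAIN THEOREM} is also natural in $\X$ and in $W$ ``as can be seen from the definitions.'' You have simply carried out the verification the paper leaves implicit (strict naturality in $\X$ from the formula $\Phi(F)_C(Y)(X)=F(C,X,Y)$, functoriality of $W\mapsto\pi_0\Delta(E,W)[\Sigma^{-1}]$, and the standard packaging of a 2-natural hom-isomorphism into a 2-adjunction), in somewhat more detail than the paper itself provides.
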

\begin{proof}  The result of Theorem \ref{MAIN THEOREM} almost proves this.  The isomorphism in the conclusion of the statement is also pseudo-natural in $\X$ and in $W$, as can be seen from the definitions of the morphisms giving the isomorphism.  \end{proof}

Now, if $C\in\mathfrak C_0$ is an object, then consider the colimit weighted by the canonical representable 2-functor $\mathbf yC\colon \mathfrak C^{op}\to\mathfrak{Cat}$.  The computation underlying Theorem \ref{MAIN THEOREM} shows explicitly that $E\star\mathbf yC$ is equivalent to the category $EC$.  For indeed on the one hand there is a functor 
\[ F\colon EC \to \pi_0\Delta(E,\mathbf yC)
\] 
given by 
\begin{equation} \label{F assignments}
F(X) = (C,X,1) \qquad F(u) = (1,u,1)
\end{equation} 
where the latter arrow is viewed reduced modulo connected components in the target.  The assignments for $F$ are completed by then passing to the category of fractions.  Denote the composite again by $F$.  This is plainly a functor.  On the other hand, there is a functor $G\colon \Delta(E,\mathbf yC)\to EC$ given in the following way.  On an object $(B,X,f)$ with $f\colon B\to C$, take the image under $G$ to be the image of $X$ under the transition functor $f_!$, namely, 
\begin{equation} \label{G obj assign} G(B,X,f):= f_!X.
\end{equation}
Now, fix a morphism $(B,X,f) \to (D,Y,g)$ given by $(h,u,\theta)$ with $u\colon h_!X\to Y$ in $ED$ and $\theta\colon f\Rightarrow gh$ a 2-cell of $\mathfrak C$.  The image under $G$ is defined to be the composite
$$\begin{tikzpicture}
\node(1){$f_!X$};
\node(2)[node distance=.9in, right of=1]{$g_!h_!X$};
\node(3)[node distance=.8in, right of=2]{$g_!Y$};
\draw[->](1) to node [above]{$(\theta_!)_X$}(2);
\draw[->](2) to node [above]{$g_!u$}(3);
\end{tikzpicture}$$
where $\theta_!$ is the image under $E$ of the 2-cell $\theta$.  That $G$ is a functor follows by the naturality of the images of the various 2-cells under $E$.  But $\Delta(E,\mathbf yC)$ is also a 2-category.  The assignments for $G$ are well-defined on connected components of $\Delta(E,\mathbf yC)$.  For let $\alpha\colon (h,u,\theta)\Rightarrow (k,v,\gamma)$ denote such a 2-cell.  In particular, the 2-cells $\alpha$, $\gamma$, and $\theta$ satisfy the relationship
$$\begin{tikzpicture}
\node(1){$$};
\node(2)[node distance=1in, right of=1]{$C$};
\node(3)[node distance=.5in, above of=1]{$B$};
\node(4)[node distance=.5in, below of=1]{$D$};
\node(5)[node distance=.33in, right of=1]{$\Downarrow\gamma$};
\node(6)[node distance=3.0in, right of=2]{$C.$};
\node(7)[node distance=2.5in, right of=3]{$B$};
\node(8)[node distance=2.5in, right of=4]{$D$};
\node(9)[node distance=.43in, below of=7]{$\alpha$};
\node(9)[node distance=.57in, below of=7]{$\Leftarrow$};
\node(11)[node distance=.85in, left of=6]{$\Downarrow \theta$};
\node(12)[node distance=.6in, right of=2]{$=$};
\draw[->](3) to node [left]{$k$}(4);
\draw[->](3) to node [above]{$\;\;f$}(2);
\draw[->](4) to node [below]{$g$}(2);
\draw[->](7) to node [above]{$f$}(6);
\draw[->](8) to node [below]{$g$}(6);
\draw[->,bend left](7) to node [right]{$h$}(8);
\draw[->,bend right](7) to node [left]{$k$}(8);
\end{tikzpicture}$$
And so, the images under $G$ of the two 1-cells of $\Delta(E,\mathbf yC)$ above are the left and right sides of the diamond in the following figure:
$$\begin{tikzpicture}
\node(1){$g_!h_!X$};
\node(2)[node distance=.8in, right of=1]{$$};
\node(3)[node distance=.8in, right of=2]{$g_!k_!X.$};
\node(4)[node distance=.6in, above of=2]{$f_!X$};
\node(5)[node distance=.6in, below of=2]{$g_!Y$};
\node(6)[node distance=.25in, above of=2]{$(I)$};
\node(7)[node distance=.25in, below of=2]{$(II)$};
\draw[->](4) to node [above]{$(\theta_!)_X\;\;\;\;\;$}(1);
\draw[->](4) to node [above]{$\;\;\;\;\;\;(\gamma_!)_X$}(3);
\draw[->](1) to node [below]{$g_!u\;\;\;$}(5);
\draw[->](3) to node [below]{$\;\;\;g_!v$}(5);
\draw[->,dashed](1) to node [above]{$$}(3);
\end{tikzpicture}$$
The dashed arrow above is the image under the transition functor $g_!$ of the component $(\alpha_!)_X$.  The triangle (I) commutes by the condition on the 2-cells $\alpha$, $\gamma$, and $\theta$ mentioned above.  The triangle (II) commutes since it is the image under the transition functor $g_!$ of the commutative triangle
$$\begin{tikzpicture}
\node(1){$$};
\node(2)[node distance=1in, right of=1]{$Y$};
\node(3)[node distance=.4in, above of=1]{$h_!X$};
\node(4)[node distance=.4in, below of=1]{$k_!X$};
\draw[->](3) to node [left]{$(\alpha_!)_X$}(4);
\draw[->](3) to node [above]{$\;u$}(2);
\draw[->](4) to node [below]{$\;v$}(2);
\end{tikzpicture}$$
coming by definition with the 2-cell $\alpha$.  In particular, the discussion shows that $G$ extends to a functor on the 1-category of connected components, also denoted by $G\colon \pi_0\Delta(E,\mathbf yC)\to EC$, since every path is constructed from such 2-cells.

\begin{cor} \label{unit for tensor 1} For each $C\in\mathfrak C_0$, the functors $F$ and $G$ in the discussion above induce an equivalence of categories $E\star \mathbf yC \simeq EC$.
\end{cor}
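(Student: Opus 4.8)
The plan is to upgrade the functors $F$ and $G$ of the discussion above into a pair of mutually quasi-inverse functors. First I would produce a factorization $\bar G\colon E\star\mathbf yC\to EC$ of $G$ through the category of fractions: the discussion above already shows that $G$ descends to a functor $\pi_0\Delta(E,\mathbf yC)\to EC$, so by Theorem \ref{cat of frac theorem} it remains only to check that $G$ inverts the images of cartesian morphisms. But if $(h,u,\theta)$ is cartesian then $u$ is an isomorphism and $\theta$ an invertible $2$-cell, so both $g_!u$ and $(\theta_!)_X$ are isomorphisms --- pseudo-functors take invertible $2$-cells to invertible $2$-cells and whiskering preserves invertibility --- whence $G(h,u,\theta)=g_!u\circ(\theta_!)_X$ is invertible in $EC$. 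Next, $\bar G\circ F=\mathrm{id}_{EC}$ holds on the nose: on objects $\bar GF(X)=G(C,X,1_C)=(1_C)_!X=X$, since $E$ is normalized so that $E(1_C)=\mathrm{id}_{EC}$ (Definition \ref{ps funct defn}); and on arrows $F(u)=(1_C,u,1_{1_C})$, so $\bar GF(u)=(1_C)_!u\circ\big((1_{1_C})_!\big)_X=u$, again by normalization together with the fact that $E$ preserves identity $2$-cells and their coherence isomorphisms.

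The substantive step is to exhibit a natural isomorphism $\mathrm{id}_{E\star\mathbf yC}\Rightarrow F\circ\bar G$. For each object $(B,X,f)$ of $\Delta(E,\mathbf yC)$ one has $F\bar G(B,X,f)=(C,f_!X,1_C)$, and the cartesian morphism $\eta_{(B,X,f)}:=(f,1_{f_!X},1_f)\colon (B,X,f)\to(C,f_!X,1_C)$ becomes an isomorphism in $E\star\mathbf yC$. I would check that these components, regarded in $E\star\mathbf yC$, are natural in $(B,X,f)$; being pointwise invertible, they then form a natural isomorphism. Since $E\star\mathbf yC$ is a localization of $\pi_0\Delta(E,\mathbf yC)$, naturality with respect to the inverse of a cartesian arrow follows from naturality with respect to the arrow itself, so it suffices to test against an arbitrary morphism $\phi=(h,u,\theta)\colon(B,X,f)\to(D,Y,g)$ of $\Delta(E,\mathbf yC)$. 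Computing the composites $\eta_{(D,Y,g)}\circ\phi$ and $F\bar G(\phi)\circ\eta_{(B,X,f)}$ via the composition law of the diagonal category yields two morphisms $(B,X,f)\to(C,g_!Y,1_C)$ whose underlying $1$-cells of $\mathfrak C$ are $gh$ and $f$ respectively; the $2$-cell $\theta\colon f\Rightarrow gh$ is then a $2$-cell of $\Delta(E,\mathbf yC)$ between them, the two required commutative triangles being, on the $E$-side, the factorization of $(\theta_!)_X$ through the coherence isomorphism $(gh)_!X\cong g_!h_!X$, and on the $\mathbf yC$-side the evident identity $\theta\circ 1_f=\theta$. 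Hence the two composites lie in the same connected component, agree in $\pi_0\Delta(E,\mathbf yC)$, and therefore agree in $E\star\mathbf yC$ --- which is exactly the desired naturality square.

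With $\bar G\circ F=\mathrm{id}_{EC}$ and $F\circ\bar G\cong\mathrm{id}_{E\star\mathbf yC}$ in hand, $F$ and $\bar G$ are mutually quasi-inverse and the equivalence $E\star\mathbf yC\simeq EC$ follows. The one genuinely delicate point is the naturality computation of the previous paragraph, where the coherence isomorphisms of $E$ must be carried through the composition in $\Delta(E,\mathbf yC)$; as with the lemmas of \S\ref{assignments and up}, this bookkeeping becomes harmless precisely upon passing to connected components and then to the category of fractions.
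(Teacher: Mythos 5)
Your proposal is correct and follows essentially the same route as the paper: the strict identity $\bar G\circ F=\mathrm{id}_{EC}$, plus a natural transformation $\mathrm{id}\Rightarrow F\circ\bar G$ whose components are the cartesian arrows $(f,1,1)\colon(B,X,f)\to(C,f_!X,1_C)$, invertible after localization. The paper leaves the descent of $G$ to the category of fractions and the naturality check as "straightforward from the definitions"; your explicit verification via the $2$-cell $\theta\colon f\Rightarrow gh$ connecting the two composites is exactly the intended argument.
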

\begin{proof}  In fact, it follows immediately from the definitions that $GF=1$.  On the other hand, it is straightforward, again from the definitions, to construct a natural system of maps $1\Rightarrow FG$, each component of which is a cartesian arrow in $\Delta(E,\mathbf yC)$, hence invertible when passing to the category of fractions, and thus yielding the rest of the equivalence.  \end{proof}

\begin{cor} \label{unit for tensor 2} The equivalence $E\star \mathbf yC \simeq EC$ of Corollary \ref{unit for tensor 1} is pseudo-natural in $C$, yielding a pseudo-natural equivalence $E\star\mathbf y \simeq E$.  In this sense, Yoneda is a unit for the 2-functor $E\star-$.
\end{cor}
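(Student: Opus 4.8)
The plan is to assemble the component equivalences $G_C\colon E\star\mathbf yC\to EC$ of Corollary \ref{unit for tensor 1} into a pseudo-natural transformation $G\colon E\star\mathbf y\Rightarrow E$ and then to exhibit a quasi-inverse, so that $G$ is a pseudo-natural equivalence. First note that $E\star\mathbf y\colon\mathfrak C\to\mathfrak{Cat}$ is a $2$-functor: the Yoneda embedding $\mathbf y\colon\mathfrak C\to\Hom(\mathfrak C^{op},\mathfrak{Cat})$ and the construction $E\star-$ are both $2$-functors. Concretely, for $f\colon C\to D$ the functor $(E\star\mathbf y)f$ sends an object $(B,X,g)$ of $\pi_0\Delta(E,\mathbf yC)$ to $(B,X,fg)$ and a morphism $(h,u,\theta)$ to $(h,u,f\ast\theta)$, and for a $2$-cell $\alpha\colon f\Rightarrow f'$ the natural transformation $(E\star\mathbf y)\alpha$ has at $(B,X,g)$ the class of the left-leg-identity span $(1,1,\alpha\ast g)\colon(B,X,fg)\to(B,X,f'g)$.

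Next I would supply the coherence data required by Definition \ref{PSNATTRANSF}: an invertible $2$-cell comparing $(Ef)\circ G_C$ and $G_D\circ(E\star\mathbf y)f$ for each $f\colon C\to D$. Chasing $(B,X,g)$ around the square gives $f_!g_!X$ on one side and $(fg)_!X$ on the other, so the component at $(B,X,g)$ is the pseudo-functoriality constraint $\phi^E_{g,f}\colon f_!g_!X\to(fg)_!X$ of $E$ from Definition \ref{ps funct defn}. That these components form a natural transformation of functors $E\star\mathbf yC\to ED$ reduces, by the universal property of the localization of Theorem \ref{cat of frac theorem}, to naturality along the morphisms of $\Delta(E,\mathbf yC)$, and this follows from the naturality of the constraints $\phi^E$ together with the definition of $G$ on morphisms (which invokes the $2$-cell images $\theta_!$ under $E$); the components are invertible because $E$ is a normalized pseudo-functor.

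It then remains to verify the two axioms of Definition \ref{PSNATTRANSF}. For composable $p\colon C\to D$ and $q\colon D\to D'$, chasing an object $(B,X,g)$ through both pasted composites reduces the first axiom to the associativity cocycle of $E$ relating $\phi^E_{g,qp}$, $\phi^E_{pg,q}$, $\phi^E_{g,p}$ and $\phi^E_{p,q}$; the constraints of the $2$-functor $E\star\mathbf y$ being identities, they contribute nothing. For a $2$-cell $\alpha\colon f\Rightarrow f'$, the same chase, after transport through $G_C$ and $G_D$ and using that $G_D$ carries $(1,1,\alpha\ast g)$ to $E(\alpha\ast g)$ on the relevant fibre, reduces the second axiom to the identity $\phi^E_{g,f'}\circ(\alpha_!\ast g_!)=E(\alpha\ast g)\circ\phi^E_{g,f}$ at $X$, which is exactly the horizontal compatibility \ref{ps funct coherence cndn} for $E$ applied to $\alpha$ and the identity $2$-cell on $g$. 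Hence $G$ is a pseudo-natural transformation, and since each $G_C$ is an equivalence by Corollary \ref{unit for tensor 1} and each coherence cell is invertible, $G$ is a pseudo-natural equivalence.

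Finally, to produce a quasi-inverse I would assemble the functors $F_C\colon EC\to E\star\mathbf yC$ of Corollary \ref{unit for tensor 1} into a pseudo-natural transformation $F\colon E\Rightarrow E\star\mathbf y$, whose coherence cells are classes of suitable cartesian morphisms of $\Delta(E,\mathbf yD)$ (invertible in the localization) and whose axioms reduce, as for $G$, to the coherence of $E$. By Corollary \ref{unit for tensor 1} one has $G_CF_C=1_{EC}$ strictly, while the natural isomorphisms $1\Rightarrow F_CG_C$ constructed there, being componentwise cartesian, satisfy the modification condition of Definition \ref{modification} and so constitute an invertible modification $1_{E\star\mathbf y}\Rrightarrow FG$. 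Together these give the pseudo-natural equivalence $E\star\mathbf y\simeq E$; read as saying that $E\star-$ restricted along the Yoneda embedding recovers $E$ up to equivalence, this is the asserted sense in which Yoneda is a unit. I expect the main obstacle to be the first axiom of Definition \ref{PSNATTRANSF}: unwinding the action of $E\star-$ on the $1$- and $2$-cells $\mathbf yf$ and matching the resulting pasted $2$-cells against the associativity cocycle for $E$ takes careful, if routine, bookkeeping given the reductions already available from \S \ref{assignments and up}.
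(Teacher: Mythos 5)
Your proposal is correct, but it routes the argument differently from the paper. The paper's proof works entirely on the $F$ side: it declares the coherence cell of the square comparing $F_C$, $f_!$, $1\star\mathbf yf$ and $F_D$ to have $X$-component the class of the cartesian morphism $(f,1,1)\colon(C,X,f)\to(D,f_!X,1)$, notes this is invertible in the localization, and asserts that naturality and the two axioms of Definition \ref{PSNATTRANSF} follow from the construction; the quasi-inverse side is left implicit, riding on Corollary \ref{unit for tensor 1}. You instead spend most of your effort making $G$ pseudo-natural, taking the constraint cells $\phi^E_{g,f}\colon f_!g_!X\to(fg)_!X$ as coherence data and reducing the axioms to the associativity cocycle of $E$ and to condition \ref{ps funct coherence cndn}; only your final paragraph reproduces the paper's $F$-side construction. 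Both routes are sound. The paper's choice is the cheaper one: the cartesian cells are invertible for free in the category of fractions and no explicit appeal to the cocycle conditions of $E$ is needed, whereas your $G$-side argument must descend naturality through the localization and invoke the coherence axioms of $E$ explicitly. What your version buys is an explicit quasi-inverse pair $(F,G)$ with $GF=1$ and an invertible modification $1\Rrightarrow FG$, rather than relying on the (true but unstated) fact that a pseudo-natural transformation with equivalence components is a pseudo-natural equivalence. Two small slips that do not affect correctness: the cells $\phi^E_{g,f}$ are invertible because $E$ is a pseudo-functor, not because it is normalized (normalization concerns the unit constraints $\phi_A$); and the naturality of the $G$-coherence cells against a morphism $(h,u,\theta)$ uses condition \ref{ps funct coherence cndn} applied to $1_f\ast\theta$, not merely ``naturality of the constraints.''
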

\begin{proof}  For an arrow $f\colon C\to D$ of $\mathfrak C$, the required coherence cell
$$\begin{tikzpicture}
\node(1){$EC$};
\node(2)[node distance=1in, right of=1]{$E\star \mathbf yC$};
\node(3)[node distance=.8in, below of=1]{$ED$};
\node(4)[node distance=.8in, below of=2]{$E\star \mathbf yD$};
\node(5)[node distance=.5in, right of=1]{$$};
\node(6)[node distance=.4in, below of=5]{$\phi_f\cong$};
\draw[->](1) to node [above]{$F_C$}(2);
\draw[->](1) to node [left]{$f_!$}(3);
\draw[->](2) to node [right]{$1\star \mathbf yf$}(4);
\draw[->](3) to node [below]{$F_D$}(4);
\end{tikzpicture}$$
has as its $X$-component for $X\in EC$, the arrow $(f,1,1)\colon (C,X,f)\to (D,f_!X,1)$ which is cartesian, hence invertible in $E\star \mathbf yD$.  Naturality in $X$ follows from the definition.  The two pseudo-naturality conditions of Definition \ref{PSNATTRANSF} follow by the construction of the colimit. \end{proof}

\begin{cor}  Every category-valued pseudo-functor $E\colon \mathfrak C\to\mathfrak{Cat}$ is pseudo-naturally equivalent to a strict 2-functor.
\end{cor}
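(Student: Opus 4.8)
The plan is to build a strict model of $E$ out of the construction $E\star-$ applied to representable weights. Write $\mathbf y\colon\mathfrak C\to[\mathfrak C^{op},\mathfrak{Cat}]$ for the Yoneda embedding, $\mathbf yC=\mathfrak C(-,C)$ and $\mathbf yf=\mathfrak C(-,f)$. The first point to record is that $\mathbf y$ is a \emph{strict} $2$-functor: each $\mathbf yC$ is a genuine $2$-functor $\mathfrak C^{op}\to\mathfrak{Cat}$, and $\mathbf y(gf)=\mathbf yg\circ\mathbf yf$, $\mathbf y1_C=1$ hold on the nose since post-composition of arrows of $\mathfrak C$ is strictly associative and unital. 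In particular each $\mathbf yf$ is a $2$-natural transformation, with identity pseudo-naturality cells.

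Next I would form $E':=(E\star-)\circ\mathbf y\colon\mathfrak C\to\mathfrak{Cat}$, so that $E'C=E\star\mathbf yC$, $E'f=1\star\mathbf yf$ and $E'\alpha=1\star\mathbf y\alpha$, and argue that $E'$ is \emph{strict}. This is exactly where the $2$-naturality of the $\mathbf yf$ is used. Tracing through the construction of \S\ref{colim candidate constru}, the functor $1\star\beta$ induced on $\Delta(E,-)$ by a $1$-cell $\beta$ of $\Hom(\mathfrak C^{op},\mathfrak{Cat})$ is assembled from the components $\beta_C$ and the pseudo-naturality isomorphisms $\beta_f$; when $\beta$ is $2$-natural these isomorphisms are identities, so no coherence data intervenes, and since $\mathbf y(gf)=\mathbf yg\circ\mathbf yf$ strictly one obtains $(1\star\mathbf yg)\circ(1\star\mathbf yf)=1\star\mathbf y(gf)$ on the nose, with likewise $1\star\mathbf y1_C=1$ and the evident compatibility on $2$-cells. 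Equivalently one may simply quote Corollary \ref{tensor-hom adjunction}, since a left $2$-adjoint is a strict $2$-functor and $E'$ is then the composite of the strict $2$-functors $E\star-$ and $\mathbf y$.

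Finally I would invoke Corollary \ref{unit for tensor 2}: it supplies a pseudo-natural equivalence $E\star\mathbf y\simeq E$ in $\Hom(\mathfrak C,\mathfrak{Cat})$, that is, $E'\simeq E$; since $E'$ is a strict $2$-functor, this is the assertion. The only delicate point in the argument is the strictness of $E'$, and the reason it goes through is the coherence-theoretic one flagged in the introduction: passing through connected components and a category of fractions has already absorbed all of the coherence data of $E$, so once the weight is a strict representable and the transition $\mathbf yf$ is $2$-natural, the diagram $E'$ carries no coherence constraints at all. Everything else is the bookkeeping already carried out in \S\ref{assignments and up}.
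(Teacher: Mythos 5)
Your proposal is correct and follows the same route as the paper, which simply declares the corollary to be a restatement of Corollary \ref{unit for tensor 2}. The only added content is your explicit justification that $E' = (E\star-)\circ\mathbf y$ is a strict $2$-functor (via strictness of $\mathbf y$ and $2$-functoriality of $E\star-$), a point the paper leaves implicit but which is exactly what makes the restatement legitimate.
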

\begin{proof} This is just a restatement of Corollary \ref{unit for tensor 2}.  \end{proof}

\section{Application to Bicolimits}

\label{bicolims}

In \S \ref{bicolim comp as tensor subsection} weighted bicolimits as in Defintion \ref{bicolim defn} are computed as certain pseudo-colimits weighted by a canonical hom-functor.  But first in \S \ref{tensor subsection} the bicolimit candidate is derived using ideas from enriched category theory as a kind of tensor product of pseudo-functors.  The last subsection \S \ref{section comparing} provides some comments on the comparison of pseudo- and bicolimits.

\subsection{The Tensor Product}

\label{tensor subsection}

In the theory of ordinary presheaves, the tensor product of $P\colon \C^{op}\to\mathbf{Set}$ and $Q\colon \C\to\mathbf{Set}$ is a colimit.  But in IX.6 of \cite{MacLane}, it is indicated that the tensor product is also realized as a coend.  That is, for a cocomplete category $\E$ with $Q\colon \C\to\E$, the tensor product is the coend
\[ \int^C(QC)\cdot (PC)
\]
of the copower bifunctor.  Now, in enriched theory, for $\mathscr V$ closed monoidal and $\C$ a $\mathscr V$-category, the copower of $P\in \C_0$ by $Q\in\mathscr V$ is an object $Q\otimes P$ of $\C$ yielding an isomorphism
\[ \C(Q\otimes P,F)\cong \mathscr V(P,\C(Q,F)).
\]
When $\C=\mathscr V$, the copower reduces to the monoidal tensor of $\mathscr V$.  So, in the case that $\mathscr V$ is $\mathbf{Cat}$, a closed symmetric monoidal category under the cartesian product, the copower bifunction is the product bifunctor $Q\times P\colon \C\times\C^{op}\to\mathbf{Set}$.  Enriched theory also tells us how to compute the coend of such a bifunctor $B\colon \C\times \C^{op}\to\mathscr V$.  It is the hom-weighted colimit
\[ \int^CB(C,C):= B\star \C(-,-)
\]
as given in equation (3.66) of \cite{Enriched}.  (Note $\C(-,-)$ is contravariant on $\C\times \C^{op}$.)  

Now, the definition above makes sense even thought the present work is not primarily viewing 2-categories as $\mathbf{Cat}$-enriched.  In this way, it yields a candidate for a ``tensor product" of category-valued pseudo-functors.  That is, define the tensor product of pseudo functors $E\colon \mathfrak C\to\mathfrak{Cat}$ and $W\colon \mathfrak C^{op}\to\mathfrak{Cat}$ to be the coend of the product bifunctor $E\times W\colon  \mathfrak C\times \mathfrak C^{op} \to\mathfrak{Cat}$ as in
\begin{equation} \label{tensor defn} E\otimes_{\mathfrak C}W:= (E\times W)\star \mathfrak C(-,-)
\end{equation}
via the weighted pseudo-colimit constructed in \ref{colim candidate constru}.  Of course the notation is somewhat tendentious.  But the subsequent results provide some further justification.  First note that Theorem \ref{MAIN THEOREM} shows exactly how to compute this tensor product.

\begin{prop} \label{descr bicolim} The tensor product $E\otimes_{\mathfrak C} W$ as in \ref{tensor defn} is, up to equivalence, the category obtained from the 2-category having 
\begin{enumerate}
\item as objects those quintuples
\[ (C,D,X,Y,f) \qquad C,D\in\mathfrak C \qquad X\in EC \qquad Y\in WD\qquad f\colon C\to D\in\mathfrak C_1
\]
\item and as arrows those quintuples
\[(h,k,u,v,\alpha)\colon (C,D,X,Y, f)\to (A, B,U,V, g)
\]
with $\alpha \colon f\Rightarrow kgh$ a 2-cell of $\mathfrak C$ and two vertical arrows $u\colon h_!X\to U$ and $v\colon k^*Y\to V$;
\item and finally as 2-cells $(h,k,u,v,\alpha) \Rightarrow (m,n,x,y,\beta)$ between arrows as above those pairs $(\gamma,\theta)$ of 2-cells $\gamma\colon h\Rightarrow m$, $\theta\colon k\Rightarrow n$ of $\mathfrak C$ for which there are commutative triangles
$$\begin{tikzpicture}
\node(1){$$};
\node(2)[node distance=1in, right of=1]{$U$};
\node(3)[node distance=.4in, above of=1]{$h_!X$};
\node(4)[node distance=.4in, below of=1]{$m_!X$};
\node(5)[node distance=2.5in, right of=2]{$V$};
\node(6)[node distance=2.5in, right of=3]{$k^*Y$};
\node(7)[node distance=2.5in, right of=4]{$n^*Y$};
\draw[->](3) to node [left]{$(\gamma_!)_X$}(4);
\draw[->](3) to node [above]{$u$}(2);
\draw[->](4) to node [below]{$x$}(2);
\draw[->](6) to node [above]{$v$}(5);
\draw[->](7) to node [below]{$y$}(5);
\draw[->](6) to node [left]{$(\alpha^*)_Y$}(7);
\end{tikzpicture}$$ 
and for which there is an equality of composed 2-cells
$$\begin{tikzpicture}
\node(1){$C$};
\node(2)[node distance=1in, right of=1]{$D$};
\node(3)[node distance=.8in, below of=1]{$A$};
\node(4)[node distance=.8in, below of=2]{$B$};
\node(5)[node distance=.5in, right of=1]{$$};
\node(6)[node distance=.4in, below of=5]{$\Downarrow\alpha$};
\node(7)[node distance=1.6in, right of=2]{$C$};
\node(8)[node distance=1in, right of=7]{$D$};
\node(9)[node distance=.8in, below of=7]{$A$};
\node(10)[node distance=.8in, below of=8]{$B$};
\node(11)[node distance=.5in, right of=7]{$$};
\node(12)[node distance=.4in, below of=11]{$\Downarrow\beta$};
\node(13)[node distance=.65in, left of=6]{$\gamma$};
\node(14)[node distance=.12in, below of=13]{$\Leftarrow$};
\node(15)[node distance=.65in, right of=6]{$\theta$};
\node(16)[node distance=.12in, below of=15]{$\Rightarrow$};
\node(17)[node distance=1.4in, right of=6]{$=$};
\draw[->](1) to node [above]{$f$}(2);
\draw[->](1) to node [right]{$h$}(3);
\draw[->,bend right=80](1) to node [left]{$m$}(3);
\draw[->](4) to node [left]{$k$}(2);
\draw[->,bend right=80](4) to node [right]{$n$}(2);
\draw[->](3) to node [below]{$g$}(4);
\draw[->](7) to node [above]{$f$}(8);
\draw[->](7) to node [left]{$m$}(9);
\draw[->](10) to node [right]{$n$}(8);
\draw[->](9) to node [below]{$g$}(10);
\end{tikzpicture}$$
\end{enumerate}
by taking connected components and passing to a category of fractions as in \ref{colimit construction}.
\end{prop}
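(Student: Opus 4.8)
The plan is to read the proposition off Theorem~\ref{MAIN THEOREM}, after unwinding the diagonal-category construction of \S\ref{colim candidate constru} in the case at hand. By \ref{tensor defn}, the object $E\otimes_{\mathfrak C}W$ is the weighted pseudo-colimit $(E\times W)\star\mathfrak C(-,-)$ of the (normalized) pseudo-functor $E\times W\colon\mathfrak C\times\mathfrak C^{op}\to\mathfrak{Cat}$ against the weight $\mathfrak C(-,-)\colon(\mathfrak C\times\mathfrak C^{op})^{op}\to\mathfrak{Cat}$, both taken over the small 2-category $\mathfrak C\times\mathfrak C^{op}$. Applying Theorem~\ref{MAIN THEOREM} with this indexing 2-category gives an isomorphism --- hence in particular an equivalence ---
\[ E\otimes_{\mathfrak C}W\;\cong\;\pi_0\Delta\big(E\times W,\ \mathfrak C(-,-)\big)\big[\Sigma^{-1}\big], \]
where $\Sigma$ is the set of images of the cartesian morphisms of $\Delta(E\times W,\mathfrak C(-,-))$, that is, of those arrows whose covariant component $(u,v)$ and whose weight component $\alpha$ are invertible. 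So it suffices to identify this diagonal 2-category, on the nose, with the 2-category described in items (1)--(3).

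For the identification I would just transcribe the definition of $\Delta(-,-)$. An object of $\mathfrak C\times\mathfrak C^{op}$ is a pair $(C,D)$ of objects of $\mathfrak C$; the covariant pseudo-functor evaluates there to $(E\times W)(C,D)=EC\times WD$ and the weight to $\mathfrak C(C,D)$, so an object of $\Delta$ is exactly a quintuple $(C,D,X,Y,f)$ with $X\in EC$, $Y\in WD$ and $f\colon C\to D$, as in (1). A 1-cell $(C,D)\to(A,B)$ of $\mathfrak C\times\mathfrak C^{op}$ is a pair $(h\colon C\to A,\ k\colon B\to D)$ of 1-cells of $\mathfrak C$; its covariant transition on $E\times W$ is $h_!\times k^*$, while the contravariant transition of the weight along it is the functor $\mathfrak C(A,B)\to\mathfrak C(C,D)$ sending $g$ to $kgh$. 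Hence a morphism of $\Delta$ consists of $(h,k)$ together with a morphism $(u,v)\colon(h_!X,k^*Y)\to(U,V)$ of $EA\times WB$ --- i.e.\ $u\colon h_!X\to U$ and $v\colon k^*Y\to V$ --- and a morphism of $\mathfrak C(C,D)$ from $f$ to $kgh$, i.e.\ a 2-cell $\alpha\colon f\Rightarrow kgh$ of $\mathfrak C$; this is precisely (2).

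For the 2-cells: a 2-cell $(h,k)\Rightarrow(m,n)$ of $\mathfrak C\times\mathfrak C^{op}$ is a pair of 2-cells $\gamma\colon h\Rightarrow m$ and $\theta\colon k\Rightarrow n$ of $\mathfrak C$, since $(-)^{op}$ on the second factor reverses 1-cells but leaves 2-cells unchanged. The 2-cell condition of $\Delta$ then demands a commuting triangle in the codomain fibre $(E\times W)(A,B)=EA\times WB$ and one in the domain fibre $\mathfrak C(C,D)$ of the weight. Read componentwise along the product, the first splits into exactly the pair of triangles displayed in (3), with vertical comparison maps the relevant components of $E\gamma$ and $W\theta$. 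Since $\mathfrak C(-,-)$ is a strict 2-functor, the ``triangle'' in $\mathfrak C(C,D)$ is literally the equation $(\theta\ast 1_g\ast\gamma)\circ\alpha=\beta$ of 2-cells of $\mathfrak C$, which unpacks to the displayed pasting identity. Composition and identities in $\Delta(E\times W,\mathfrak C(-,-))$ reduce to those of $E$, $W$ and $\mathfrak C$ in the evident way, so the identification is complete, and taking $\pi_0$ and inverting $\Sigma$ as in \ref{colimit construction} yields the statement via Theorem~\ref{MAIN THEOREM}.

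The step needing genuine care --- and hence the main obstacle --- is the bookkeeping of variances in the product 2-category $\mathfrak C\times\mathfrak C^{op}$: that the hom-weight $\mathfrak C(-,-)$ is contravariant in its totality, so its transition functor along $(h,k)$ is $g\mapsto kgh$ and its 2-cellular action at $g$ is the interchange cell $\theta\ast 1_g\ast\gamma$; and that the weight component of a morphism of $\Delta$ runs \emph{from} the domain label $f$ \emph{into} the pulled-back codomain label $kgh$, which is what pins down the direction of $\alpha$. Once those conventions are fixed there is no further difficulty, the remainder being a direct application of Theorem~\ref{MAIN THEOREM} to $\mathfrak C\times\mathfrak C^{op}$.
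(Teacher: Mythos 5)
Your proposal is correct and follows the same route as the paper, whose proof consists precisely of the remark that the proposition is obtained by unravelling the construction of $\pi_0\Delta(E\times W,\mathfrak C(-,-))[\Sigma^{-1}]$ over the indexing 2-category $\mathfrak C\times\mathfrak C^{op}$. You simply carry out that unravelling explicitly, with the variance bookkeeping (transition $g\mapsto kgh$, 2-cell action $\theta\ast 1_g\ast\gamma$, and the direction of $\alpha$) handled correctly.
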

\begin{proof}  This is just a matter of unraveling the construction of the pseudo-colimit candidate 
\[  E\otimes_{\mathfrak C}W:= (E\times W)\star \C(-,-) \simeq \pi_0\Delta(E\times W,\mathfrak C(-,-))[\Sigma^{-1}]
\]
as in \ref{colimit construction}.  \end{proof}

\begin{remark} Notice that this is much like the ``twisted arrow category'' description of coends as in IX.6 of \cite{MacLane}. 
\end{remark}

\subsection{The Tensor Product Computes Weighted Bicolimits}

\label{bicolim comp as tensor subsection}

In particular, the claim is that, for $E$ and $W$ as above, there is an equivalence of categories
\begin{equation} \label{bitensor} \mathfrak{Cat}(E\otimes_{\mathfrak C} W,\A)\simeq \mathfrak{Hom}(\mathfrak C^{op},\mathfrak{Cat})(W,\mathfrak{Cat}(E,\A))
\end{equation}
for any category $\A$.  Further, the equivalence is pseudo-natural in $W$ and $\A$, making the evident functor $E\otimes_{\mathfrak C}-$ the left biadjoint of the ``hom'' functor $\mathfrak{Cat}(E,-)$.  This will further justify the tensor notation and the name.  But further, the equivalence \ref{bitensor} is exactly the definition of the bicolimit of $E$ weighted by $W$ as in Definition \ref{bicolim defn}.  Thus, the tensor construction will give a concrete computation of this weighted bicolimit.

There are at least two ways to go about a proof.  On the one hand, it could proceed in the manner of the proof of Theorem \ref{MAIN THEOREM}, showing explicitly the functorial assignments in each direction.  The proof is then a matter of showing well-definition and that the necessary coherence conditions obtain.  On the other hand, the following development sidesteps some of that complication.

\begin{lemma}[Technical Equivalence]  Let $\mathfrak C$ denote a small 2-category.  For each small category $\A$ and any pseudo-functors $E\colon \mathfrak C\to\mathfrak{Cat}$ and $W\colon \mathfrak C^{op}\to\mathfrak{Cat}$, there is an equivalence of categories
\begin{equation} \label{tech equiv} \Hom (\mathfrak C^{op}\times \mathfrak C,\cat)(\mathfrak C(-,-),\cat (E\times W,\A))\simeq \Hom (\mathfrak C^{op},\cat )(W,\cat(E,\A)).
\end{equation}
Additionally, this equivalence is pseudo-natural in $W$ and in $\A$.
\end{lemma}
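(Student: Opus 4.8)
The plan is to produce the equivalence by hand, in the spirit of the proof of Theorem~\ref{MAIN THEOREM}, and to isolate the one step that is not pure bookkeeping as a Yoneda-type reduction. First I would construct, from a pseudo-natural $\theta\colon W\to\cat(E,\A)$, a pseudo-natural $\tilde\theta\colon\mathfrak C(-,-)\to\cat(E\times W,\A)$: send $f\colon C\to D$ in $\mathfrak C(C,D)$ to the functor $EC\times WD\to\A$ given by $(X,Y)\mapsto\theta_C(f^{*}Y)(X)$, with its action on morphisms of $EC\times WD$ read off from functoriality of $\theta_C(-)(-)$ in both arguments, and with $\tilde\theta_{C,D}(\beta)$ for a $2$-cell $\beta\colon f\Rightarrow f'$ of $\mathfrak C$ built from $W\beta$ and the appropriate component of $\theta_C$; the pseudo-naturality cell of $\tilde\theta$ at $(h,k)\colon(C,D)\to(C',D')$ in $\mathfrak C^{op}\times\mathfrak C$ is obtained by pasting the coherence isomorphism $\theta_h$ with the normalized (pseudo-)functoriality isomorphisms of $W$ and of $E$; and on a modification $m\colon\theta\Rrightarrow\gamma$ put $\tilde m_{C,D}(f)_{(X,Y)}:=m_{C,\,f^{*}Y,\,X}$. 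Checking that $\tilde\theta$ obeys the two pseudo-naturality axioms of Definition~\ref{PSNATTRANSF} and that $\tilde m$ obeys the modification axiom of Definition~\ref{modification} is a routine, if lengthy, diagram chase against the corresponding axioms for $\theta$, $m$, $E$ and $W$.

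In the other direction, from $\Xi\colon\mathfrak C(-,-)\to\cat(E\times W,\A)$ define $\hat\Xi\colon W\to\cat(E,\A)$ by currying the value at identities: $\hat\Xi_C(Y)(X):=\Xi_{C,C}(1_C)(X,Y)$, the coherence isomorphism $(\hat\Xi)_f$ for $f\colon C\to D$ being the component at $1_C\in\mathfrak C(C,C)$ of the pseudo-naturality cell of $\Xi$ at the morphism $(1_C,f)\colon(C,C)\to(C,D)$ of $\mathfrak C^{op}\times\mathfrak C$; its pseudo-naturality axioms again descend from those of $\Xi$, and the action on modifications is by restriction. These assignments are manifestly functorial, so they give functors between the two hom-categories, and it remains to see they are quasi-inverse. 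The composite $\theta\mapsto\tilde\theta\mapsto\widehat{\tilde\theta}$ returns $\theta$ on the nose, since $\widehat{\tilde\theta}_C(Y)(X)=\tilde\theta_{C,C}(1_C)(X,Y)=\theta_C((1_C)^{*}Y)(X)=\theta_C(Y)(X)$ as $W$ is normalized, and the coherence data is likewise returned unchanged. For the other composite, $\widetilde{\hat\Xi}_{C,D}(f)(X,Y)=\hat\Xi_C(f^{*}Y)(X)=\Xi_{C,C}(1_C)(X,f^{*}Y)$; since $\mathfrak C(1_C,f)$ sends $1_C$ to $f$ while $\cat(E\times W,\A)(1_C,f)$ is precomposition with $1_{EC}\times f^{*}$, the pseudo-naturality cell of $\Xi$ at $(1_C,f)$ supplies a natural isomorphism $\Xi_{C,D}(f)(X,Y)\cong\Xi_{C,C}(1_C)(X,f^{*}Y)$. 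The substantive point of the proof is then the verification that these isomorphisms, over all $(C,D,f,X,Y)$, assemble into an invertible modification $\widetilde{\hat\Xi}\Rrightarrow\Xi$; this is the Yoneda-type step and it uses the full pseudo-naturality coherence of $\Xi$.

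Pseudo-naturality of the whole equivalence in $W$ and in $\A$ is read off from the definitions: every ingredient of the construction is assembled from evaluation, whiskering and the cartesian-closed structure of $\cat$, each of which is itself (pseudo-)natural, so the requisite coherence cells are the evident ones and their axioms fall to the same sort of chase. The main obstacle is therefore organizational rather than conceptual --- exhibiting the coherence $2$-cells of $\tilde\theta$ and the modification $\widetilde{\hat\Xi}\Rrightarrow\Xi$ and checking their axioms is voluminous. A more conceptual but less self-contained route is to note that $\Hom(\mathfrak C^{op}\times\mathfrak C,\cat)(\mathfrak C(-,-),T)$ is the pseudo-limit of $T$ weighted by the hom-$2$-functor, i.e.\ a ``pseudo-end'' $\int_C T(C,C)$; taking $T=\cat(E\times W,\A)$ and using the currying isomorphism $\cat(EC\times WC,\A)\cong\cat(WC,\cat(EC,\A))$ identifies this pseudo-end with $\Hom(\mathfrak C^{op},\cat)(W,\cat(E,\A))$, pseudo-naturally in $W$ and $\A$. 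I would present the explicit construction as the proof and add the pseudo-end identification as a remark explaining why it works.
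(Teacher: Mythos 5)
Your proposal is correct and follows essentially the same route as the paper's proof: explicit quasi-inverse functors, one obtained by evaluating at identities $\hat\Xi_C(Y)(X)=\Xi_{C,C}(1_C)(X,Y)$ and one by extending along $f$ via the transition functors, with one composite strictly the identity (by normalization) and the other isomorphic to the identity through the pseudo-naturality cells of $\Xi$. The only difference is cosmetic --- you transport the $W$-variable, setting $\tilde\theta_{C,D}(f)(X,Y)=\theta_C(f^{*}Y)(X)$, where the paper transports the $E$-variable via $\theta_D(Y)(f_!X)$; the two are identified by the pseudo-naturality cell at $(f,1)$ versus $(1,f)$, and your closing pseudo-end remark is a nice conceptual gloss not present in the paper.
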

\begin{proof}  The proof of this result is a straightforward but tedious exercise in verifying coherence conditions.  For example, given pseudo-natural 
\[ \alpha\colon  \mathfrak C(-,-) \Rightarrow \cat (E\times W,\A)
\]
define $\Phi(\alpha)$ on the right side of \ref{tech equiv} to have value in $\A$
\[ \Phi(\alpha)_C(Y)(X):= \alpha_{C,C}(1_C)(X,Y)
\] 
where $C\in \mathfrak C_0$ and $(X,Y)\in EC\times WC$.  There is an evident assignment on arrows of $EC$ making $\alpha_{C,C}(1_C)(-,Y)$ a functor $EC\to \A$ for each $Y\in WC$.  Over all such $Y$, the $\Phi(\alpha)_C(Y)$ are the object assignments of a functor $WC\to\cat(EC,\A)$.  These functors $\Phi(\alpha)_C$ are the components of a pseudo-natural transformation $W\to\cat(E,\A)$ with coherence isomorphisms induced from those of $\alpha$.  The assignment for $\Phi$ on modifications is easier and makes $\Phi$ a functor.

On the other hand, given a pseudo-natural $\theta\colon W\to\cat(E,\A)$ on the right side of \ref{tech equiv}, a transformation $\Psi(\theta)$ can be given with value in $\A$
\[ \Psi(\theta)_{C,D}(f)(X,Y):= \theta_D(Y)(f_!X)
\]
for $C,D\in \mathfrak C_0$ and $f\colon C\to D$ any arrow and objects $X\in EC$ and $Y\in WD$.  Given an arrow $(u,v)\colon (X,Y)\to (Z,W)$ in $EC\times WD$, the arrow assignment $\theta_D(Y)(f_!X)\to \theta_D(W)(f_!Z)$ in $\A$ is that given by either way round the naturality square
$$\begin{tikzpicture}
\node(1){$\theta_D(Y)(f_!X)$};
\node(2)[node distance=2in, right of=1]{$\theta_D(Y)(f_!Z)$};
\node(3)[node distance=.8in, below of=1]{$\theta_D(W)(f_!X)$};
\node(4)[node distance=.8in, below of=2]{$\theta_D(W)(f_!Z).$};
\node(5)[node distance=.5in, right of=1]{$$};
\node(6)[node distance=.4in, below of=5]{$$};
\draw[->](1) to node [above]{$\alpha_D(Y)(f_!u)$}(2);
\draw[->](1) to node [left]{$\alpha_D(v)_{f_!X}$}(3);
\draw[->](2) to node [right]{$\alpha_D(v)_{f_!Z}$}(4);
\draw[->](3) to node [below]{$\alpha_D(W)(f_!u)$}(4);
\end{tikzpicture}$$
This assignment makes each $\Psi(\theta)_{C,D}(f)$ into a functor $EC\times WD\to\A$, each $\Psi(\theta)_{C,D}$ into a functor, and, with appropriate coherence isomorphisms coming from $\theta$, each $\Psi(\theta)$ into a pseudo-natural transformation $\mathfrak C(-,-) \Rightarrow \cat (E\times W,\A)$.  The arrow assignment for $\Psi$ is straightforward.  

From the definitions it follows that $\Phi\Psi=1$ holds strictly and that $\Psi\Phi\cong 1$ making $\Phi$ and $\Psi$ appropriately pseudo-inverse.  The equivalences are indeed appropriately pseudo-natural in $W$ and in fact strictly 2-natural in $\A$.  \end{proof}

\begin{remark}
Notice that it is in the second half of the proof of Lemma \ref{tech equiv} that the constructions go ``off the diagonal" in the sense that arbitrary arrows $f\colon C\to D$ must be accommodated in the construction since the hom-bifunctor is the source of the pseudo-natural transformation.  This explains why weighting by the hom-bifunctor computes the bicolimit as in the following theorem.
\end{remark}

\begin{theo}  The bicolimit of a pseudo-functor $E\colon \mathfrak C\to\cat$ weighted by a pseudo-functor $W\colon \mathfrak C^{op}\to\cat$ is given by 
\[ E\otimes_{\mathfrak C}W:= (E\times W)\star \mathfrak C(-,-) \simeq E\star_{bi}W
\]
that is, by the pseudo-colimit of the product bifunctor $E\times W\colon \mathfrak C\times \mathfrak C^{op}\to\cat$ weighted by the usual hom-bifunctor $\mathfrak C(-,-)\colon \mathfrak C^{op}\times\mathfrak C\to\cat$.
\end{theo}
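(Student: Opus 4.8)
The plan is to deduce this directly by composing two facts already established: the Colimit Computation (Theorem \ref{MAIN THEOREM}) applied to the product bifunctor, and the Technical Equivalence lemma. First I would observe that the weight $\mathfrak C(-,-)$ has exactly the right variance to serve as a weight for $E\times W$: since $(\mathfrak C\times\mathfrak C^{op})^{op} = \mathfrak C^{op}\times\mathfrak C$, the hom-bifunctor $\mathfrak C(-,-)\colon\mathfrak C^{op}\times\mathfrak C\to\cat$ is precisely a pseudo-functor on $(\mathfrak C\times\mathfrak C^{op})^{op}$. Hence Theorem \ref{MAIN THEOREM}, applied with $E\times W$ in place of $E$ and $\mathfrak C(-,-)$ in place of $W$, yields an isomorphism of categories
\[ \mathfrak{Cat}\bigl((E\times W)\star\mathfrak C(-,-),\A\bigr)\cong \Hom(\mathfrak C^{op}\times\mathfrak C,\cat)\bigl(\mathfrak C(-,-),\cat(E\times W,\A)\bigr)
\]
for any small category $\A$, and this isomorphism is pseudo-natural in $\A$ by the argument of Corollary \ref{tensor-hom adjunction}.

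Next I would invoke the Technical Equivalence lemma \ref{tech equiv}, which supplies an equivalence of categories
\[ \Hom(\mathfrak C^{op}\times\mathfrak C,\cat)\bigl(\mathfrak C(-,-),\cat(E\times W,\A)\bigr)\simeq \Hom(\mathfrak C^{op},\cat)\bigl(W,\cat(E,\A)\bigr)
\]
that is pseudo-natural in $\A$ (in fact strictly $2$-natural in $\A$) and in $W$. Composing the isomorphism from Theorem \ref{MAIN THEOREM} with this equivalence produces an equivalence
\[ \mathfrak{Cat}\bigl((E\times W)\star\mathfrak C(-,-),\A\bigr)\simeq \Hom(\mathfrak C^{op},\cat)\bigl(W,\cat(E,\A)\bigr)
\]
pseudo-natural in $\A$. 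By Definition \ref{bicolim defn}, this is precisely the universal property of the bicolimit $E\star_{bi}W$, so $(E\times W)\star\mathfrak C(-,-)\simeq E\star_{bi}W$, as claimed. Finally, I would note that since the composite equivalence is also pseudo-natural in $W$, the induced functor $E\otimes_{\mathfrak C}-$ is left biadjoint to $\mathfrak{Cat}(E,-)$, justifying the tensor terminology as promised.

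The only real content beyond bookkeeping is confirming that the two named results compose cleanly: that the pseudo-naturality in $\A$ of the Theorem \ref{MAIN THEOREM} isomorphism is compatible with the pseudo-naturality in $\A$ of the Technical Equivalence, so that their composite is again pseudo-natural, and likewise that pseudo-naturality in $W$ is inherited (here only the Technical Equivalence contributes, since the left-hand side of the Theorem \ref{MAIN THEOREM} isomorphism already has $E\times W$ built in). This is routine once one unwinds the pasting of the coherence cells, so I expect the main ``obstacle'' to be purely notational — keeping the variances of $\mathfrak C^{op}\times\mathfrak C$ versus $(\mathfrak C\times\mathfrak C^{op})^{op}$ straight throughout — rather than any genuine mathematical difficulty.
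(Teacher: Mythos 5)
Your proposal is correct and follows essentially the same route as the paper: the paper's proof likewise applies Theorem \ref{MAIN THEOREM} to the product bifunctor $E\times W$ weighted by $\mathfrak C(-,-)$ and then composes the resulting isomorphism with the Technical Equivalence lemma to obtain the defining equivalence for the bicolimit. Your additional remarks on the variance bookkeeping and the pseudo-naturality in $\A$ and $W$ are consistent with what the paper records in the lemma and the subsequent corollary.
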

\begin{proof}  The main result, Theorem \ref{MAIN THEOREM}, yields a pseudo-natural isomorphism 
\[ \cat(E\otimes_{\mathfrak C}W,\A)\cong \Hom (\mathfrak C^{op}\times \mathfrak C,\cat)(\mathfrak C(-,-),\cat (E\times W,\A))
\]
The previous technical result, Lemma \ref{tech equiv}, then finishes the proof by definition of the bicolimit $E\star_{bi}W$ in Definition \ref{bicolim defn}.  \end{proof}

\begin{cor}  The tensor product $E\otimes_{\mathfrak C} W$ is the object assignment of a left biadjoint to the hom 2-functor $\mathfrak{Cat}(E,-)$.
\end{cor}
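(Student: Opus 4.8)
The plan is to deduce the corollary from the pseudo-natural equivalence \eqref{bitensor} by appealing to the standard recognition principle for biadjunctions: a pseudo-functor $R\colon\mathfrak B\to\mathfrak A$ of $2$-categories admits a left biadjoint exactly when, for each object $A$ of $\mathfrak A$, there is an object $LA$ of $\mathfrak B$ together with an equivalence of categories $\mathfrak B(LA,B)\simeq\mathfrak A(A,RB)$ pseudo-natural in $B$; the object assignment of any left biadjoint is then determined up to equivalence by these biuniversal objects, and conversely such data assemble into a left biadjoint (this is standard $2$-category theory). Here $R$ is the hom $2$-functor $\cat(E,-)\colon\cat\to\Hom(\mathfrak C^{op},\cat)$, the object $A$ is a weight $W$, the object $LA$ is $E\otimes_{\mathfrak C}W$, and $B$ is a test category $\A$. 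So it suffices to produce the equivalence $\cat(E\otimes_{\mathfrak C}W,\A)\simeq\Hom(\mathfrak C^{op},\cat)(W,\cat(E,\A))$ pseudo-naturally in $\A$, and, in order to identify the object assignment with $E\otimes_{\mathfrak C}-$, pseudo-naturally in $W$ as well.

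This equivalence is assembled from two ingredients already in hand. Applying Theorem \ref{MAIN THEOREM} to the pseudo-functor $E\times W\colon\mathfrak C\times\mathfrak C^{op}\to\cat$ weighted by $\mathfrak C(-,-)\colon\mathfrak C^{op}\times\mathfrak C\to\cat$ gives an isomorphism $\cat(E\otimes_{\mathfrak C}W,\A)\cong\Hom(\mathfrak C^{op}\times\mathfrak C,\cat)(\mathfrak C(-,-),\cat(E\times W,\A))$, which by Corollary \ref{tensor-hom adjunction} is pseudo-natural in $\A$, and which is pseudo-natural in $W$ because $W\mapsto E\times W$ is a $2$-functor. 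The Technical Equivalence Lemma then supplies an equivalence $\Hom(\mathfrak C^{op}\times\mathfrak C,\cat)(\mathfrak C(-,-),\cat(E\times W,\A))\simeq\Hom(\mathfrak C^{op},\cat)(W,\cat(E,\A))$, pseudo-natural in both $W$ and $\A$. Splicing the isomorphism with the equivalence, and recalling the definition $E\otimes_{\mathfrak C}W:=(E\times W)\star\mathfrak C(-,-)$ from \eqref{tensor defn}, yields the desired pseudo-natural equivalence. One also notes that $E\otimes_{\mathfrak C}-$ is genuinely a pseudo-functor $\Hom(\mathfrak C^{op},\cat)\to\cat$, being the composite of the $2$-functor $W\mapsto E\times W$ with the partial weighted colimit $D\mapsto D\star\mathfrak C(-,-)$, whose pseudo-functoriality in the diagram $D$ is visible from the right-hand side of the isomorphism in Theorem \ref{MAIN THEOREM}. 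By the recognition principle, $\cat(E,-)$ therefore has a left biadjoint whose object assignment agrees up to equivalence with $E\otimes_{\mathfrak C}-$, which is the claim.

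I do not anticipate a genuine obstacle: the substantive content — the closed-form colimit computation and its $2$-dimensional naturality, together with the diagonal-versus-off-diagonal comparison — has already been carried out in Theorem \ref{MAIN THEOREM}, Corollary \ref{tensor-hom adjunction}, and the Technical Equivalence Lemma. The only points requiring care are bookkeeping: confirming that the two pseudo-naturalities are in the \emph{same} pair of variables ($W$ and $\A$) so that the isomorphism and the equivalence compose, and phrasing the biadjunction recognition principle so that its input is a pseudo-natural family of equivalences of hom-categories rather than an adjunction with strictly natural unit and counit. Should a fully self-contained argument be wanted, one can instead mimic the proof of Theorem \ref{MAIN THEOREM} and exhibit the unit and counit pseudo-natural transformations explicitly, composing the functors $\Phi,\Psi$ of the Technical Equivalence Lemma with those of \S\ref{assignments and up} and verifying the triangle identities up to invertible modification; but this only re-does work already packaged in the cited results.
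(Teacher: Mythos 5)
Your proposal is correct and follows essentially the same route as the paper: the paper's own proof likewise reduces the claim to checking that the equivalence assembled from the main theorem and the Technical Equivalence Lemma is pseudo-natural in $W$ and $\A$ (noting, as you do, that naturality in $\A$ is in fact strict). You spell out the biadjunction recognition principle more explicitly than the paper does, but this is the same argument.
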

\begin{proof}  This is just a matter of showing that the equivalence given by Propositions 3.8 and 3.8 is pseudo-natural.  In fact naturality in $\mathscr A$ is strict, as can easily be computed from the definitions.  \end{proof}

\subsection{Comparing Pseudo- and Bi-colimits}

\label{section comparing}

Let $E\colon \mathfrak C\to\cat$ and $W\colon \mathfrak C^{op}\to\cat$ denote pseudo-functors on a 2-category $\mathfrak C$.  The goal is to construct a comparison functor 
\[ G\colon E\otimes_{\mathfrak C}W \to E\star W
\]
from the bicolimit to the pseudo-colimit.  By construction, this can be induced from the underlying so-called ``diagonal categories"
\begin{equation}
\label{underlying comparison functor} G\colon \Delta(E\times W,\mathfrak C(-,-)) \to \Delta(E,W)
\end{equation}
and then by showing that the assignments are functorial, well-defined on connected components, and finally invert the appropriate ``cartesian morphisms."  First give the assignments for \ref{underlying comparison functor}.  On objects such as in the description of Proposition \ref{descr bicolim}, take
\begin{equation} \label{comparison obj assign} G(C,D,X,Y,f) := (C,X,f^*Y).
\end{equation}
Now, take an arrow
\[ (h,k,m,n,\alpha)\colon (C,D, X,Y, f) \to (A,B,U,V,g)
\]
as in the description of Proposition \ref{descr bicolim}.  Notice that there is thus an arrow $n\colon k^*Y\to V$ of $WD$.  Hitting this with $g^*$ and then $h^*$ yields the composite
$$\begin{tikzpicture}
\node(1){$f^*Y$};
\node(2)[node distance=1in, right of=1]{$(kgh)^*Y$};
\node(3)[node distance=1in, right of=2]{$h^*g^*k^*Y$};
\node(4)[node distance=1.2in, right of=3]{$h^*g^*V$};
\draw[->](1) to node [above]{$(\alpha^*)_Y$}(2);
\draw[->](2) to node [above]{$\cong$}(3);
\draw[->](3) to node [above]{$h^*g^*n$}(4);
\end{tikzpicture}$$
in $WA$.  Notice that the middle arrow above is a composite of various coherence isomorphisms coming with the pseudo-functor $W$.  In any event, denote this arrow for now by $t$.  Take the image of $(h,k,m,n,\alpha)$ under $G$ to be
\begin{equation} \label{comparison arrow assign}
G(h,k,m,n,\alpha): = (h,m, t).
\end{equation}
Finally, give the 2-cell assignment.  Start with a 2-cell $(\gamma,\theta)$ as in Proposition \ref{descr bicolim}.  The assignment is then
\begin{equation} \label{comparison cell assign}
G(\gamma,\theta):= \gamma.
\end{equation}

\begin{lemma} \label{underlying comparison functor lemma} The assignments \ref{comparison obj assign}, \ref{comparison arrow assign} and \ref{comparison cell assign} above induce a functor of 2-categories as in \ref{underlying comparison functor}.
\end{lemma}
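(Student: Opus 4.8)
The plan is to verify, in the order objects, $1$-cells, then $2$-cells, that the assignments \ref{comparison obj assign}, \ref{comparison arrow assign} and \ref{comparison cell assign} yield legitimate data in $\Delta(E,W)$, and then to check that these assignments strictly preserve identities and both compositions, so that $G$ is a $2$-functor in the sense of Definition \ref{ps funct defn}.

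At the object and $1$-cell levels only type-checking is needed. A triple $(C,X,f^*Y)$ is automatically an object of $\Delta(E,W)$; and for an arrow $(h,k,m,n,\alpha)\colon (C,D,X,Y,f)\to(A,B,U,V,g)$ the triple $(h,m,t)$ of \ref{comparison arrow assign} has first component a $1$-cell $h\colon C\to A$, middle component the given $m\colon h_!X\to U$ of $EA$, and last component the composite $t\colon f^*Y\to h^*g^*V=h^*(g^*V)$ of $WC$, so, as $\Delta(E,W)$ imposes no relations on such triples, $G$ is well-defined on $1$-cells.

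The real content is at the level of $2$-cells. Given a $2$-cell as in Proposition \ref{descr bicolim}, with constituent $\mathfrak C$-$2$-cells $\gamma\colon h\Rightarrow h'$ and $\theta\colon k\Rightarrow k'$, I must show that $\gamma$ is a $2$-cell of $\Delta(E,W)$ between the images of the two arrows, that is, that the two commuting triangles in the definition of $2$-cells of $\Delta(E,W)$ hold. The triangle in the fibre of $E$ is literally the first commuting triangle packaged with the $2$-cell. The triangle in the fibre of $W$ is the point requiring work: expanding the map $t$ attached to the source arrow and the analogous map attached to the target arrow, and using the second commuting triangle of the $2$-cell together with the naturality of the whiskered natural transformation $\gamma^*\colon h^*\Rightarrow (h')^*$, the required identity reduces to an equation between two composites $f^*Y\to (h')^*g^*(k')^*Y$; invoking the pasting identity $\beta=(\theta\ast 1_g\ast\gamma)\cdot\alpha$ that relates the $2$-cell data, and then the coherence axioms for the pseudo-functor $W$ --- preservation of vertical composition of $2$-cells together with the horizontal-composition coherence \ref{ps funct coherence cndn} --- identifies these composites. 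This is the crux of the verification.

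Everything else is formal. Preservation of identity $1$-cells follows from normalization of $E$ and $W$, which makes the transition functors $(1_C)_!$, $(1_D)^*$ and the coherence isomorphisms occurring in $t$ into identities; preservation of identity, vertical, and horizontal composites of $2$-cells is immediate, since \ref{comparison cell assign} merely extracts the first component of the pair and the $2$-cell structure of $\Delta(E\times W,\mathfrak C(-,-))$ in that component is inherited verbatim from $\mathfrak C$. The last laborious step is preservation of composition of $1$-cells: composing two arrows of $\Delta(E\times W,\mathfrak C(-,-))$ folds in coherence isomorphisms of $E$ and of $W$, just as composition in $\Delta(E,W)$ does; the $E$-components of $G$ applied to a composite and of the composite of the $G$-images match on the nose, while the $W$-components, built into the maps $t$, agree after a diagram chase governed entirely by the associativity coherence of $W$. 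I expect this $1$-cell-composition bookkeeping and the fibre-of-$W$ triangle above to be the only genuinely involved parts of the proof; the rest is routine.
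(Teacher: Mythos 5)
Your proposal is correct and follows essentially the same route as the paper: both identify the only nontrivial point as the well-definedness of the $2$-cell assignment on the $W$-side, and both establish the required triangle in the fibre of $W$ by combining the pasting identity relating $\alpha$, $\beta$, $\gamma$, $\theta$ (pushed through $W$ via its coherence), the commutative triangle packaged with the $2$-cell, and the naturality of $\gamma^*$, treating the remaining functoriality checks as routine coherence bookkeeping.
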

\begin{proof}  There is nothing to prove in the $\mathfrak C$- and $E$-arguments of $\Delta(E,W)$.  For the $\mathfrak C$-argument follows by definition; and the $E$-argument is the usual one for the 2-category of elements of a pseudo-functor.  However, there is work to be done in the $W$-argument.  Functoriality at the levels of morphisms and 2-cells are routine arguments using coherence conditions and naturality assumptions.  It needs to be seen, however, that the 2-cell assignment \ref{comparison cell assign} is well-defined in the first place.  That is, a certain triangle as described in the construction of $\Delta(E,W)$ must commute.  The triangle in question appears as the outside in the following diagram:
$$\begin{tikzpicture}
\node(1){$f^*Y$};
\node(2)[node distance=1in, right of=1]{$$};
\node(3)[node distance=1in, right of=2]{$h^*g^*n^*Y$};
\node(4)[node distance=.8in, above of=3]{$(kgh)^*Y\cong h^*g^*k^*Y$};
\node(5)[node distance=.8in, below of=3]{$(ngm)^*Y\cong m^*g^*n^*Y$};
\node(6)[node distance=2in, right of=4]{$h^*g^*V$};
\node(7)[node distance=2in, right of=5]{$n^*g^*V.$};
\draw[->](1) to node [above]{$(\alpha^*)_Y$}(4);
\draw[->](1) to node [below]{$(\beta^*)_Y$}(5);
\draw[->](4) to node [right]{$h^*g^*(\theta^*)_Y$}(3);
\draw[->](3) to node [right]{$(\gamma^*)_{h^*n^*Y}$}(5);
\draw[->](4) to node [above]{$h^*g^*v$}(6);
\draw[->](5) to node [below]{$m^*g^*y$}(7);
\draw[->](6) to node [right]{$(\gamma^*)_{g^*V}$}(7);
\draw[->](3) to node [below]{$h^*g^*y$}(6);
\end{tikzpicture}$$
The three subdiagrams each commute, making the whole figure commutative.  The leftmost portion commutes by the 2-cell compatibility from Proposition \ref{descr bicolim}; the top right triangle commutes by the commutative triangle condition of Proposition \ref{descr bicolim}; and finally, the bottom right square commutes by the naturality of $\gamma^*$.  Thus, the 2-cell assignment is well-defined.  \end{proof}

\begin{lemma}  The functor of Lemma \ref{underlying comparison functor lemma} is well-defined on connected components and on the category of fractions forming $E\otimes_{\mathfrak C}W$.
\end{lemma}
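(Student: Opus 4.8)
The plan is to obtain the comparison functor $G\colon E\otimes_{\mathfrak C}W\to E\star W$ by pushing the $2$-functor of Lemma \ref{underlying comparison functor lemma} through the two quotient steps out of which these categories are built: passing to connected components, and then localizing at cartesian morphisms. For the first step there is essentially nothing to prove. Since $\pi_0$ is functorial on $2\text-\mathfrak{Cat}$ — it is the left adjoint recalled in \S\ref{conn comp section} — applying it to the $2$-functor $G\colon\Delta(E\times W,\mathfrak C(-,-))\to\Delta(E,W)$ of Lemma \ref{underlying comparison functor lemma} yields a functor $\pi_0 G$ between the associated $1$-categories of connected components. Concretely, because $G$ sends $2$-cells of $\Delta(E\times W,\mathfrak C(-,-))$ to $2$-cells of $\Delta(E,W)$, it preserves the relation of being joined by a finite zig-zag of $2$-cells on parallel $1$-cells, so the rule $[\phi]\mapsto[G\phi]$ on path-classes is well-defined and functorial.

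For the second step, by Theorem \ref{cat of frac theorem} it suffices to check that the composite of $\pi_0 G$ with the localization functor $L_\Sigma\colon\pi_0\Delta(E,W)\to E\star W$ inverts the set $\Sigma'$ of images of the cartesian morphisms of $\Delta(E\times W,\mathfrak C(-,-))$; the desired $G\colon E\otimes_{\mathfrak C}W\to E\star W$ then follows from the universal property of $\pi_0\Delta(E\times W,\mathfrak C(-,-))[\Sigma'^{-1}]$. The point is to read off which arrows of the iterated diagonal category are cartesian in the sense of \S\ref{colim candidate constru}: an arrow $(h,k,u,v,\alpha)$ is cartesian precisely when $u$, $v$, and the $2$-cell $\alpha$ are all invertible, since $\alpha$ is exactly the component of the arrow along the transition functor of the weight $\mathfrak C(-,-)$. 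For such an arrow, \ref{comparison arrow assign} gives $G(h,k,u,v,\alpha)=(h,u,t)$, where $t$ is the evident composite of $(\alpha^*)_Y\colon f^*Y\to(kgh)^*Y$, a coherence isomorphism of $W$, and $h^*g^*v$. Here $(\alpha^*)_Y$ is invertible because the pseudo-functor $W$ carries the invertible $2$-cell $\alpha$ to an invertible $2$-cell, using the equation $W(\beta\alpha)=W\beta\,W\alpha$ from Definition \ref{ps funct defn}; the coherence cell is invertible by definition; and $h^*g^*v$ is invertible since transition functors preserve isomorphisms. Hence $t$ is invertible, so $(h,u,t)$ is itself a cartesian morphism of $\Delta(E,W)$, its class in $\pi_0\Delta(E,W)$ lies in $\Sigma$, and $L_\Sigma$ inverts it. Thus $L_\Sigma\circ\pi_0 G$ inverts $\Sigma'$, as required.

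I do not expect a real obstacle. The one place that requires care is the bookkeeping just indicated — recognizing that for $\Delta(E\times W,\mathfrak C(-,-))$ the word ``cartesian'' includes invertibility of $\alpha$, because the weight is the hom-bifunctor and its transition data is precisely the action on $2$-cells. Granting this, the argument uses only that pseudo-functors preserve invertible $2$-cells together with the universal property of the category of fractions; in particular it does not revisit the functoriality or the $\pi_0$-invariance of $G$, which are provided by Lemma \ref{underlying comparison functor lemma} and by the first paragraph.
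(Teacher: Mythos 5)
Your proof is correct and follows essentially the same route as the paper's: well-definedness on connected components because the 2-cell assignment \ref{comparison cell assign} sends 2-cells to 2-cells, and then checking that cartesian morphisms (meaning $u$, $v$, and $\alpha$ all invertible) are sent to cartesian morphisms of $\Delta(E,W)$, hence are inverted after localization. You merely spell out in more detail why the $W$-component $t$ of the image is invertible, which the paper leaves implicit.
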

\begin{proof}  Well-definition on connected components follows since the 2-cell assignment \ref{comparison cell assign} is basically just projection.  On the other hand, given a cartesian morphism $(h,k,m,n,\alpha)$, both $m$ and $n$ are invertible and $\alpha$ is an invertible 2-cell.  Thus, the image arrow by \ref{comparison arrow assign} is cartesian as well, hence invertible when passing to the category of fractions forming $E\star W$.  \end{proof}

\begin{theo} \label{comparison functor theorem} For any pseudo-functors $E$ and $W$ as in the discussion above, there is a well-defined, surjective-on-objects functor  
\[ G\colon E\otimes_{\mathfrak C}W \to E\star W
\]
from the bicolimit of $E$ weighted by $W$ to the pseudo-colimit.
\end{theo}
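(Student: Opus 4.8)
The plan is to assemble the statement from the two lemmas immediately preceding it, threading together the functoriality of $\pi_0$ and the universal property of the category of fractions; the only point not already in hand is surjectivity on objects.

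First I would invoke Lemma \ref{underlying comparison functor lemma}: the assignments \ref{comparison obj assign}, \ref{comparison arrow assign} and \ref{comparison cell assign} form a 2-functor $\Delta(E\times W,\mathfrak C(-,-))\to\Delta(E,W)$. Applying the connected-components functor of \S\ref{conn comp section}, which is functorial on 2-categories, yields a functor of 1-categories $\pi_0 G\colon \pi_0\Delta(E\times W,\mathfrak C(-,-))\to\pi_0\Delta(E,W)$; this is the first half of the lemma following \ref{underlying comparison functor lemma}. I would then post-compose with the canonical localization $L_\Sigma\colon \pi_0\Delta(E,W)\to E\star W$. The key fact, which is the second half of that same lemma, is that $G$ carries a cartesian morphism --- one whose two vertical legs and whose 2-cell component are all invertible --- to a cartesian morphism, which $L_\Sigma$ then inverts. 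Hence $L_\Sigma\circ\pi_0 G$ inverts the images of every cartesian arrow, so by the universal property of the category of fractions, Theorem \ref{cat of frac theorem}, it factors uniquely through the localization $\pi_0\Delta(E\times W,\mathfrak C(-,-))\to E\otimes_{\mathfrak C}W$. The induced functor $E\otimes_{\mathfrak C}W\to E\star W$ is the comparison $G$ of the statement.

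Next I would check surjectivity on objects. Neither passage to connected components nor inverting $\Sigma$ alters the underlying set of objects, so the objects of $E\otimes_{\mathfrak C}W$ are the quintuples $(C,D,X,Y,f)$ of Proposition \ref{descr bicolim} and those of $E\star W$ are the triples $(C,X,Z)$ with $Z\in WC$. Given such a triple, the quintuple $(C,C,X,Z,1_C)$ is sent by \ref{comparison obj assign} to $(C,X,1_C^*Z)$; since $W$ is normalized, $1_C^*$ is the identity on $WC$, so this is $(C,X,Z)$. Thus $G$ is surjective on objects.

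I do not expect a genuine obstacle here. The delicate verifications --- that the 2-cell assignment $G(\gamma,\theta):=\gamma$ really lands in a 2-cell of $\Delta(E,W)$, and that cartesianness is preserved --- are already done in Lemma \ref{underlying comparison functor lemma} and the lemma after it. What remains is bookkeeping with the universal property of Theorem \ref{cat of frac theorem}, applied on the source side, together with the one-line object count that quietly uses the normalization of $W$.
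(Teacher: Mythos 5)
Your proposal is correct and follows essentially the same route as the paper: the paper's proof likewise treats the theorem as an assembly of Lemma \ref{underlying comparison functor lemma} and the lemma after it (descent through $\pi_0$ and the category of fractions via preservation of cartesian morphisms), and disposes of surjectivity on objects by noting that the identifications occur only on arrows. Your explicit preimage $(C,C,X,Z,1_C)\mapsto(C,X,1_C^*Z)=(C,X,Z)$, using the standing normalization assumption, is a slightly more concrete version of the paper's one-line observation and is a welcome addition.
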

\begin{proof}  Insofar as it suffices to use the computations achieved in the main theorems of this paper, this result is just a formal restatement of the previous discussion and lemmas.  Surjective-on-objects is immediate by the definition of the object assignment in \ref{comparison obj assign} since the identifications in passing to connected components and then to categories of fractions are made on arrows and not on objects.  \end{proof}

\begin{remark} \label{no hope for comparison} There is no hope that in general there is any comparison functor in the other direction
\[ E\star W\to E\otimes_{\mathfrak C}W
\] 
induced from the underlying 2-categories.  While an hypothetical object assignment is evident, namely, that given by
\[ (C,X,Y)\mapsto (C,C,X,Y,1_C),
\]
because the indexing 2-category of the bicolimit is $\mathfrak C\times \mathfrak C^{op}$, there is no obvious way to get an arrow assignment with an appropriate mixed variance.  Finally, consider the following example, showing that in general pseudo- and bi-colimits are inequivalent.
\end{remark}

\begin{example}  Let $\mathfrak C$ denote the indexing 2-category generated by a single object $X$, a single non-identity idempotent endomorphism $x\colon X\to X$ and a single non-identity idempotent 2-cell $\xi\colon x\Rightarrow x$.  The pseudo-functors $E$ and $W$ are each given by the constant functor with value $\mathbf 1$.  The pseudo-limit is then a groupoid, whereas the bilimit is not.  For the bilimit is formed through a right calculus of fractions as in Remark \ref{calculus of fractions comment} and the image of the generator arrow $(1, 1, 1,1, \xi)$ in the bilimit is not invertible.  Hence the pseudo-colimit and bi-colimit cannot be equivalent.
\end{example}

\section{Prospectus}

\label{prospectus}

In this final section there appears some reflection and speculation upon the direction of future work proceeding from the computations and result appearing so far in the paper.

\subsection{Pseudo- and Bicolimits Indexed by Bicategories}

The main theorem of the paper has presented a computation of the pseudo-colimit $E\star W$ of a pseudo-functor $E\colon \mathfrak C\to\cat$ weighted by a pseudo-functor $W\colon \mathfrak C^{op}\to\cat$ using connected components and the category of fractions construction.  A technical consequence has been that the bicolimit of $E$ weighted by $W$ has been computed as a pseudo-colimit of a product bifunctor weighted by the canonical ``hom" functor given by $\mathfrak C$.  This has all been done assuming that $\mathfrak C$ is a strict 2-category.

A question then naturally arises as to whether, or to what extent, the present computations can be adapted to cases where the structure on the domain or codomain of $E$ and $W$ is somehow relaxed or generalized.  In one case, for example, it might be that the domain is actually a bicategory $\mathfrak B$ in the sense of \cite{Benabou}.  It will be left to subsequent work to discuss the possibility of adapting the construction in \ref{colimit construction} to compute a weighted pseudo-colimit in this case.  The question is to what the extent the coherence problems introduced by the relaxed structure of $\mathfrak B$ can be dealt with intrinsically by taking connected components in the formation of the pseudo-colimit.

\subsection{2-Sheaves and Stacks}

\label{sheaves stacks prospectus section}

Alternatively, it might be the case that the constructions of the paper will give computations of pseudo- and bicolimits of pseudo-functors $E\colon \mathfrak C\to\mathfrak K$ for more general $\mathfrak K$ than just the case $\mathfrak K = \cat$ treated so far.  In particular, the case of $\mathfrak K = \Hom(\X^{op},\cat)$ for a small 1-category $\X$ appeared in a previous version of this paper.  It was seen there that the colimit construction of this paper could be carried out ``pointwise" for each $X\in\X_0$ to give a computation of the pseudo-colimit in $\Hom(\X^{op},\cat)$ as a \emph{bona fide} pseudo-functor.  In boosting up $\X$ to a small 2-category $\mathfrak X$, however, the technical difficulty of this extended result increases considerably.  Thus, the treatment of the case of $\mathfrak K = \Hom(\mathfrak X^{op},\cat)$ will be left to a subsequent paper.

Stacks and 2-sheaves appear as 2-dimensional analogues of ordinary sheaves.  These 2-categories are developed as reflective sub-2-categories of the 2-categories of pseudo-functors $\Hom(\C^{op},\cat)$ or 2-functors $[\mathfrak C^{op},\cat]$ once the underyling 2-category $\mathfrak C$ has been equipped with some appropriate notion of ``coverage" such as a Grothendieck topology (on $|\mathfrak C|$, for example).  These developments appear, for example, in \cite{StreetSheafThy} and \cite{StreetFibsinBicats}.  Insofar as weighted pseudo- and bicolimits of reflective sub-2-categories are computed by taking the colimit in the super-2-category and applying the reflector, weighted pseudo- and bicolimits of stacks and 2-sheaves will be given using the constructions of this paper, adapted to the parameterized case of 2-presheaves discussed above.

\subsection{2-Filteredness}

\label{prospectus on filteredness}

The original motivation for the present computation of weighted pseudo-colimits $E\star W$ was to give an account of ``filteredness conditions" characterizing flat category-valued pseudo-functors on 2-categories generalizing the development for 1-categories.

The material for set-valued functors on 1-categories is well-established.  Recall that a set-valued functor $Q\colon \C\to\mathbf{Set}$ is on a small 1-category is flat if the tensor product extension
\begin{equation} \label{set-tensor} Q\otimes_{\C}-\colon [\C^{op},\mathbf{Set}] \longrightarrow \mathbf{Set}
\end{equation}
along the Yoneda embedding is left exact in the sense that it preserves, up to isomorphism, finite limits.  Flat set-valued functors are completely characterized in terms of ``filteredness conditions" on the associated category of elements.  Recall that a category $\X$ is filtered if
\begin{enumerate}
\item it has an object;
\item any two objects $X,Y\in \X_0$ fit into a span $X\leftarrow Z\to Y$;
\item any parallel arrows $f,g\colon X\rightrightarrows Y$ are equalized by an arrow $h\colon Z \to X$, in that $fh=gh$.
\end{enumerate}
(This terminology is consistent with the usage of \S VII.6 of \cite{MM} whereas other authors would use the term ``cofiltered" instead.)  The characterization is then the following.

\begin{theo} \label{flat iff filtered presheaves} A copresheaf $Q\colon \C\to\mathbf{Set}$ is flat if, and only if, either of the following equivalent conditions hold.
\begin{enumerate}
\item Its category of elements $\displaystyle \int_{\C}Q$ is filtered as above.
\item The copresheaf $Q\colon \C\to\mathbf{Set}$ is canonically a filtered colimit of representable functors.
\end{enumerate} 
\end{theo}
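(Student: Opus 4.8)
The plan is as follows. This characterization is classical (see Ch.~VII, in particular \S VII.6, of \cite{MM}), and I would organize the argument around the one-dimensional, localization-free shadow of the construction of \S\ref{colim candidate constru}. First, for the equivalence of (1) and (2): every copresheaf $Q$ is canonically the colimit of the diagram of representables indexed by $(\int_\C Q)^{op}$ that takes $(C,x)\in\int_\C Q$ to $\C(C,-)$ and a morphism of $\int_\C Q$ to the induced map of representables, the colimit being identified with $Q$ by co-Yoneda. A colimit is \emph{filtered} exactly when its shape category is filtered in the usual sense, and $(\int_\C Q)^{op}$ is filtered in the usual sense precisely when $\int_\C Q$ satisfies the three conditions stated above the theorem (whose span and equalizer variances are the opposite ones, in accordance with the parenthetical remark there). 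Thus (1) and (2) express the same property of this canonical presentation, and it remains only to prove the biconditional between flatness and (2).

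For (2) $\Rightarrow$ flat: the coend $Q\otimes_\C P=\int^C QC\times PC$ is cocontinuous in $Q$, and co-Yoneda gives $\C(C,-)\otimes_\C-\cong\mathrm{ev}_C\colon[\C^{op},\mathbf{Set}]\to\mathbf{Set}$, so every representable is flat because evaluation preserves all limits. If $Q\cong\mathrm{colim}_i\,\C(C_i,-)$ with the colimit filtered, then $Q\otimes_\C-\cong\mathrm{colim}_i\,\mathrm{ev}_{C_i}$ pointwise; since filtered colimits commute with finite limits in $\mathbf{Set}$, for any finite diagram $(P_j)$ one has $(Q\otimes_\C-)(\lim_j P_j)\cong\mathrm{colim}_i\lim_j P_j(C_i)\cong\lim_j\mathrm{colim}_i P_j(C_i)\cong\lim_j(Q\otimes_\C-)(P_j)$, so $Q\otimes_\C-$ is left exact.

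For flat $\Rightarrow$ (1): the key point is that for any $P\colon\C^{op}\to\mathbf{Set}$ the set $Q\otimes_\C P$ is the set of connected components of the category $\Delta(Q,P)$ whose objects are triples $(C,x,p)$ with $x\in QC$ and $p\in PC$ and whose morphisms $(C,x,p)\to(C',x',p')$ are the maps $u\colon C\to C'$ with $Qu(x)=x'$ and $Pu(p')=p$; here the fibres are discrete, so the localization at cartesian morphisms in \ref{colimit construction} is vacuous and $\Delta(Q,P)$ is just the usual presentation of the coend as a coequalizer. Since finite limits in $\mathbf{Set}$ are generated by the terminal object, binary products and equalizers, flatness of $Q$ is exactly the conjunction of preservation of these three, and each yields one filteredness clause. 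Applied to the constant presheaf at a one-point set, and using $\Delta(Q,\mathbf{1})\cong\int_\C Q$, preservation of the terminal object forces $\pi_0\int_\C Q$ to be a one-point set, hence $\int_\C Q$ nonempty: the first clause. Applied to the presheaf $\C(-,C)\times\C(-,C')$, preservation of binary products forces the comparison map $\pi_0\Delta\big(Q,\C(-,C)\times\C(-,C')\big)\to QC\times QC'$ to be surjective, which for each $(x,x')$ produces an object $D$, an element $e\in QD$ and maps $D\to C$ and $D\to C'$ carrying $e$ to $x$ and to $x'$ respectively, i.e.\ a span in $\int_\C Q$ between $(C,x)$ and $(C',x')$: the second clause. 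Applied to the equalizer of the parallel pair $\C(-,C)\rightrightarrows\C(-,C')$ induced by arrows $f,g\colon C\rightrightarrows C'$, preservation of equalizers shows that any $x\in QC$ with $Qf(x)=Qg(x)$ lies in the image of $Q\otimes_\C(\text{that equalizer})$, which unwinds to an object $D$, an element $e\in QD$ and $h\colon D\to C$ with $Qh(e)=x$ and $fh=gh$, i.e.\ an arrow of $\int_\C Q$ into $(C,x)$ equalizing $f$ and $g$: the third clause.

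The main obstacle is the bookkeeping of this last step: establishing the identification $Q\otimes_\C P\cong\pi_0\Delta(Q,P)$ cleanly and then matching each of the three generating finite limits to its corresponding filteredness clause while keeping the variances straight, so that the spans and equalizing arrows come out pointing the way the definition demands. Everything else is formal, and the argument is precisely the dimension-one, localization-free instance of the machinery developed in this paper.
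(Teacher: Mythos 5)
Your argument is correct, but note that the paper does not actually prove this theorem: its proof is a two-line citation, deferring clause (1) to Theorem VII.6.3 of \cite{MM} and clause (2) to the canonical colimit presentation of Theorem III.7.1 of \cite{MacLane}. What you have written is, in substance, the classical argument behind that citation, recast in the language of \S\ref{colim candidate constru}: the identification $Q\otimes_\C P\cong\pi_0\Delta(Q,P)$ is exactly the one-dimensional, discrete-fibre shadow of \ref{colimit construction}, and testing left-exactness against the terminal presheaf, binary products of representables, and equalizers of maps of representables is precisely how the three filtering clauses are extracted in \cite{MM}. Your variance bookkeeping is right: the canonical diagram of representables is indexed by $(\int_\C Q)^{op}$, so the paper's ``cofiltered-looking'' conditions on $\int_\C Q$ are exactly usual filteredness of the indexing category, and in the equalizer step the arrow $h\colon (D,e)\to(C,x)$ comes out pointing into $(C,x)$ as the definition demands. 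Two small points of care: the localization at cartesian morphisms is not so much vacuous as irrelevant (with discrete fibres every morphism of $\Delta(Q,P)$ is cartesian, but inverting morphisms does not change $\pi_0$, which is all the set-level coend sees); and preservation of the terminal object gives that $\pi_0\int_\C Q$ is a singleton, i.e.\ nonemptiness \emph{and} connectedness, of which you only need the former. What your route buys over the bare citation is a self-contained proof that visibly runs on the same machinery as Theorem \ref{MAIN THEOREM}, which is exactly the programme announced in \S\ref{prospectus on filteredness}; what it costs is the bookkeeping you flag at the end, none of which hides an actual obstruction.
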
 
\begin{proof} For the first, see Theorem VII.6.3 of \cite{MM} and its proof.  For the second, note that by Theorem III.7.1 of \cite{MacLane}, the functor $Q$ is always colimit over its category of elements. \end{proof}

Thus, with an analogue of \ref{set-tensor} for pseudo-functors $E\colon \mathfrak C\to\cat$ on a given 2-category $\mathfrak C$, it can be asked what would be the ``higher filteredness conditions" on the 2-category of elements of $E$ equivalent to the exactness of the tensor associated to $E$.  This question was first addressed and answered in \cite{DDS} where in general a notion of $\sigma$-bifilteredness was shown to be equivalent to the assumption that a certain left bi-Kan extension preserves all finite bilimits.

The overall goal of the present work, however, is to show how such a result can be achieved using the colimit computations of this paper via the calculus of fractions as in Remark \ref{calculus of fractions comment}.  By the formal similarities of the universal properties and the work of this paper, two candidates for the the analogue of \ref{set-tensor} are in fact the functors induced by the pseudo-colimit
\begin{equation}
E\otimes_{\mathfrak C}-:= E\star - \colon \Hom(\mathfrak C^{op},\cat)\to\cat
\end{equation}
and by the bicolimit
\begin{equation}
E\otimes_{\mathfrak C}:= E\star_{bi}- \colon \Hom(\mathfrak C^{op},\cat)\to\cat
\end{equation}
whose values on weights $W\colon \mathfrak C^{op}\to\cat$ have been shown to be computed using the presentations of Equation \ref{colimit construction} and Proposition \ref{descr bicolim}, respectively.  Future work will show how under the assumption of left-exactness, necessary 2-filteredness conditions on the 2-category of elements of $E$ can be extracted.  Further it will be seen how, under these filteredness assumptions, the colimit $E\star W$ arises using a right calculus of fractions and how this description of the tensor allows for a direct verification of finite limit-preservation.  Part of this work and an ``elementary generalization" appears in \cite{LambertThesis} where the calculus of fractions is described in the language of internal categories.

\section{Acknowledgments}

Much of the work of the paper was included in the author's thesis \cite{LambertThesis}.  That research was supported by the NSERC Discovery Grant of Dr. Dorette Pronk at Dalhousie University and by NSGS funding through Dalhousie University.  The author would like to thank Dr. Pronk for supervising the research and for numerous conversations about the much of the content of the paper.  The author would also like to thank Dr. Pieter Hofstra, Dr. Robert Par\'e and Dr. Peter Selinger for comments and suggestions on an earlier version of the work in the paper.

\bibliography{research}
\bibliographystyle{alpha}

\end{document}